\tikzstyle{block}=[draw opacity=0.7,line width=1.4cm]
\theoremstyle{plain}
\newtheorem{thm}{Theorem}
\newtheorem{lemma}[thm]{Lemma}
\newtheorem{conj}[thm]{Conjecture}
\newtheorem{prop}[thm]{Proposition}
\newtheorem{definition*}[thm]{Definition}
\theoremstyle{remark}
\newtheorem{remark}{Remark}
\newcommand{\PP}{\mathbb{P}} 
\newcommand{\QQ}{\mathbb{Q}} 
\newcommand{\Res}{\mathrm{Res}}  
\newcommand{\Disc}{\mathrm{Disc}}  
\newcommand{\Aut}{\mathrm{Aut}}  
\newcommand{\Gal}{\operatorname{Gal}}  
\newcommand{\Q}{\mathbb{Q}}
\newcommand{\fr}{\frac}
\newcommand{\tu}{\textup}
\newcommand{\CC}{\mathbb{C}}
\title{Arboreal representations for rational maps with few critical points}
\author[Juul, Krieger, Looper, Manes, Thompson, and Walton]{Jamie Juul, Holly Krieger, Nicole Looper, Michelle Manes, Bianca Thompson, and Laura Walton}
\begin{document}
\maketitle

\begin{abstract}
Jones conjectures the arboreal representation of a degree two rational map will have finite index in the full automorphism group of a binary rooted tree except under certain conditions. We prove a version of Jones' Conjecture for quadratic and cubic polynomials assuming the $abc$-Conjecture and Vojta's Conjecture. We also exhibit a family of degree $2$ rational maps and give examples of degree $3$ polynomial maps whose arboreal representations have finite index in the appropriate group of tree automorphisms. 
\end{abstract}
\section{Introduction}
Let $K$ be a field, and fix an algebraic closure $\bar{K}$. Given a rational function $f\in K(x)$ of degree $d\ge2$, we use $f^n$ to denote the $n$-th iterate $f\circ f\circ\cdots\circ f$, and define $f^{0}(z):=z$. 

We say $\alpha\in\PP^1(K)$ is \textit{periodic} if $f^n(\alpha)=\alpha$ for some $n\geq 1;$ the smallest such $n$ is called the \textit{exact period of $\alpha$}. The point $\alpha$ is \textit{preperiodic} if some iterate $f^m(\alpha)$ is periodic. If all critical points of $f$ are preperiodic then we say the map is \textit{post-critically finite}, or PCF.

Let $K^s$ be the separable closure of $K$ in $\bar{K}$. Choose $\alpha\in K$; for the rest of this paper we make the mild assumption that, for every $n\geq 0$, the $d^n$ solutions to $f^n(z)=\alpha$ are distinct, thereby ensuring that these solutions live in the separable closure $K^s$ of $K$.

Of recent interest, as in ~\cite{jonesbostonga,ingramgalois,jonesbostonsettled,JKMT}, is the set of iterated preimages of $\alpha\in K$ under the map $f$:
 \[
\{a\in \PP^1(K^s):f^n(a)=\alpha \text{ for some } n\geq 0\}.
\]

We consider the tree whose vertices are given by the disjoint union of the solutions to the equations
\[
f^n(z)=\alpha \text{ for }n\geq 0;
\] 
thus, the vertex set of this tree is 
\[
\bigsqcup_{n\geq 0}\left\{a\in  \PP^1(K^s):f^{n}(a)=\alpha\right\}.
\]
The edge relation of the tree is given by the action of $f$; that is, we have an edge from $\beta_1$ to $\beta_2$ if $f(\beta_1) = \beta_2$. Given the assumption above, this tree of preimages is isomorphic to the \textit{infinite rooted $d$-ary tree}, which is denoted by $T_\infty.$ See Figure~\ref{treeexample}.

\begin{center}
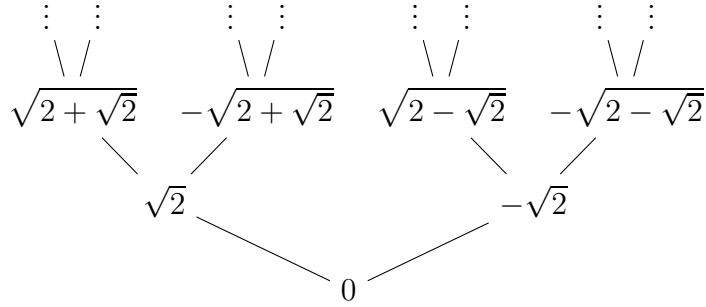
\begin{figure}[H]
\begin{tabular}{c}
\begin{forest}for tree={grow=north}
[$0$
   [$-\sqrt{2}$   
   		[$-\sqrt{2-\sqrt{2}}$
        [$\vdots$]
   		[$\vdots$
    ]]
   		[$\sqrt{2-\sqrt{2}}$
        [$\vdots$]
   		[$\vdots$
    ]
    ]]
   [$\sqrt{2}$
   		[$-\sqrt{2+\sqrt{2}}$
        [$\vdots$]
   		[$\vdots$
    ]]
   		[$\sqrt{2+\sqrt{2}}$
        [$\vdots$]
   		[$\vdots$
    ]
    ]]]
\end{forest}
\end{tabular}
\caption{The first few levels of $T_\infty$ when pulling back $\alpha=0$ by the polynomial $f(z)=z^2-2$.}
\label{treeexample}\end{figure}
\end{center}

Let $\Aut (T_\infty)$ denote the group of automorphisms of the infinite rooted tree; this group is an infinite wreath product of $S_d$, the symmetric group on $d$ letters. The absolute Galois group of $K$ acts on this copy of $T_\infty$ as tree automorphisms, which defines a continuous homomorphism $$\rho_\infty \colon \Gal(K^s/K) \to \Aut (T_\infty).$$
A continuous homomorphism $\Gal(K^s/K) \to \Aut (T_\infty)$ is called an \emph{arboreal Galois representation}~\cite[Definition~1.1]{jonesbostonga}; the particular representation $\rho_\infty$ defined above is called the \emph{arboreal Galois representation associated to the pair $(f,\alpha)$ over $K$.} The study of arboreal Galois representations dates back to work of R. W. K. Odoni in the 1980s ~\cite{odoni1,odoni3,odoni2}.  The image of $\rho_\infty$, which we denote by $G_\infty(f,\alpha)$, or $G(f)$ if $\alpha=0$, is well-studied, particularly in the degree two case \cite{HamJonMad2015,Hindes,jonessurvey,JonesManes2014}, and is the focus of this paper.

Jones conjectures for degree 2 rational maps that the following is true~\cite{jonessurvey}.
\begin{conj}\label{jonesconjecture}
Let $K$ be a global field and suppose that $f\in K(x)$ has degree two. Then $[\Aut(T_\infty):G_\infty(f,\alpha)]=\infty$ if and only if one of the following holds:
\begin{enumerate}
\item The map $f$ is post critically finite. 
\item The two critical points $\gamma_1$ and $\gamma_2$ of $f$ have a relation of the form $f^{r+1}(\gamma_1)=f^{r+1}(\gamma_2)$ for some $r\geq 1.$
\item The root $\alpha$ of $T_\infty$ is periodic under $f.$
\item There is a nontrivial M\"obius transformation that commutes with $f$ and fixes~$\alpha$.
\end{enumerate} 
\end{conj}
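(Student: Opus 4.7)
The plan is to handle the ``if'' and ``only if'' directions of the biconditional separately, since they have quite different flavors.

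The ``if'' direction—that each of (1)--(4) forces infinite index—is the more concrete side. Under (4), a nontrivial M\"obius transformation $\phi$ commuting with $f$ and fixing $\alpha$ induces a nontrivial tree automorphism of $T_\infty$ that commutes with every element of $G_\infty(f,\alpha)$, so the image is forced into the centralizer of $\phi$ in $\Aut(T_\infty)$, which is a closed subgroup of infinite index. Under (1) or (2), the standard identity expressing $\Disc(f^n(z)-\alpha)$ as a $\Res$ that factors through the critical-orbit values $f^k(\gamma_i)-\alpha$ confines these discriminants to finitely many square classes; combined with the Stoll-type criterion described below, this forces $G_n$ to sit inside a proper subgroup of the full wreath product at every sufficiently deep level. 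Under (3), if $\alpha$ has exact period $k$, then the vertex of $T_\infty$ at level $k$ corresponding to $\alpha$ itself is $K$-rational, so the image lies in a proper stabilizer subgroup at every deeper level, again of infinite index.

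For the ``only if'' direction—the substantive content of the conjecture—the natural plan is to show by induction on $n$ that the image $G_n(f,\alpha)$ in $\Aut(T_n)$ has uniformly bounded index in the full level-$n$ iterated wreath product of $S_2$. The standard criterion (as used throughout the cited work of Jones, Boston--Jones, Jones--Manes) asserts that $G_n$ is the full wreath product provided $G_{n-1}$ is maximal and the discriminant of the level-$n$ preimage polynomial is a non-square in the fixed field of $G_{n-1}$. The whole problem thus reduces to controlling the square classes of the critical-orbit values $f^n(\gamma_1)-\alpha$ and $f^n(\gamma_2)-\alpha$.

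The main obstacle lies precisely in this last step, and it is exactly here that deep Diophantine input is required. Heuristically, the negation of (1) and (2) makes the critical orbits ``generic'' and non-colliding, so their canonical heights grow like $2^n$ and their squarefull parts should be bounded; but making this rigorous is expected to require either the $abc$-Conjecture or Vojta's Conjecture, which is why the results of this paper are conditional. The negations of (3) and (4) are needed to exclude respectively a degenerate identification of $\alpha$ with an orbit point, and a hidden M\"obius symmetry that would conspire to drop the discriminant into a square class. The single hardest subtlety is the \emph{joint} analysis of the two critical orbits: ruling out that $f^n(\gamma_1)-\alpha$ and $f^n(\gamma_2)-\alpha$ occupy the same square class in a coordinated way—without the orbits themselves colliding as in (2)—is precisely the kind of coincidence that $abc$ or Vojta is needed to forbid.
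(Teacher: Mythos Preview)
The statement you are addressing is labeled \textbf{Conjecture}~\ref{jonesconjecture} in the paper, and the paper does \emph{not} supply a proof of it. Immediately after stating it, the authors write: ``The `if' direction of this conjecture is already established~\cite{jonessurvey},'' and they make no attempt at the ``only if'' direction for rational maps. The paper's actual contributions are Theorems~\ref{thm:quadraticpoly} and~\ref{thm:cubicpoly}, which are conditional (on $abc$, and on $abc$ plus Vojta) and apply only to \emph{polynomials} over number fields---not to degree-2 rational maps over global fields as in Jones' conjecture. Moreover, even in those theorems the authors replace condition~(iii) (periodicity of $\alpha$) by eventual stability, noting that the equivalence of these two conditions is itself conjectural. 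They explicitly remark that their method does not extend to the rational-function setting, since the key input from \cite{huang} is restricted to polynomials.

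So there is no ``paper's proof'' to compare your proposal against. Your sketch of the ``if'' direction is in line with the standard arguments referenced in~\cite{jonessurvey}. Your discussion of the ``only if'' direction correctly identifies the obstacles---the need for $abc$/Vojta-type input to control square classes of critical-orbit values, and the difficulty of the joint analysis of two critical orbits---but this is a diagnosis of why the conjecture is open, not a proof. In particular, your final paragraph is an accurate description of the state of affairs rather than an argument; you should not present this as a proof proposal for the full biconditional, since neither you nor the paper (nor anyone, unconditionally) has one.
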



The `if' direction of this conjecture is already established \cite{jonessurvey}. We will prove that assuming Vojta's Conjecture for blowups of $\mathbb{P}^1\times\mathbb{P}^1$ and the $abc$-Conjecture, a similar set of conditions characterizes the set of quadratic and cubic polynomials $f\in K[x]$ such that $[\textup{Aut}(T_\infty):G(f)]<\infty$, where $K$ is a number field.

\begin{definition*}
	Let $K$ be a number field. We say $f\in K[x]$ is \textbf{eventually stable} if the number of irreducible factors of $f^n(x)$ over $K$ is bounded as $n\to\infty$.
\end{definition*}

	\begin{thm}{\label{thm:quadraticpoly}} Assume the $abc$-Conjecture for number fields. Let $K$ be a number field, and let $f\in K[x]$ have degree~$2$. Then $[\textup{Aut}(T_\infty):G(f)]=\infty$ if and only if one of the following holds:
		
	\begin{enumerate}
		\item $f$ is PCF
		\item $f^n(x)$ is not eventually stable.
	\end{enumerate}
		
	\end{thm}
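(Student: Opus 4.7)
The plan is to reduce first to the monic form $f(x)=x^{2}+c$ via a linear change of coordinates (which does not change $G(f)$ up to a finite-index adjustment) and then argue the two directions separately.

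The `if' direction should follow from what is already in the literature. If $f$ is PCF, only finitely many primes of $K$ ramify in the tower of preimage fields of $0$, whereas the full group $\Aut(T_\infty)$ requires a fresh ramified prime appearing at each of the infinitely many levels; hence $G(f)$ must sit in a proper closed subgroup of infinite index. If $f^{n}(x)$ fails to be eventually stable, then $\Gal(K^{s}/K)$ has an unbounded number of orbits on level $n$ of $T_\infty$ as $n \to \infty$, while $\Aut(T_\infty)$ acts transitively on every level, so again the index is infinite.

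For the substantive `only if' direction, assume $f$ is not PCF and $f^{n}(x)$ is eventually stable; the goal is to deduce $[\Aut(T_\infty):G(f)]<\infty$. Set $c_{n}:=f^{n}(0)$, the critical orbit. The strategy follows the `maximality at each level' paradigm developed for the quadratic case by Stoll, Jones, and others. Let $N$ be the eventual stability threshold, so that for $n\ge N$ the number of irreducible factors of $f^{n}(x)$ over $K$ is a constant $r$. Writing $f^{n+1}(x)=\prod_{i}h_{i}(f(x))$ with $h_{i}$ the irreducible factors of $f^{n}(x)$, the index of $G_{n+1}$ inside $G_{n}\wr S_{2}$ is bounded by a product, over the $r$ factors $h_{i}$, of a local square-class obstruction determined by whether a certain adjusted discriminant of $h_{i}\circ f$ relative to $h_{i}$ is a square in the residue field at the root of $h_{i}$. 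For $f(x)=x^{2}+c$ the usual discriminant computation reduces each of these local obstructions to a square-class condition in $K$ that depends only on $c_{n+1}$, and which is non-obstructing precisely when $c_{n+1}$ admits a primitive prime divisor of odd multiplicity.

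The $abc$-Conjecture enters exactly at this point. Conditional on $abc$, results of Ingram--Silverman type (and the refinements of Gratton--Nguyen--Tucker) imply that the critical orbit $(c_{n})$ of a non-PCF polynomial over a number field has a primitive prime divisor for all $n\gg 0$, and a parity-controlled version gives odd multiplicity at such a prime. Combining this with the uniform factor count supplied by eventual stability, each level past some $N'\ge N$ contributes a uniformly bounded index, so $[\Aut(T_{n}):G_{n}(f)]$ is bounded independently of $n$, as desired.

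The main obstacle I expect is not the $abc$-input itself but the bookkeeping that links `primitive prime divisor of odd valuation in $c_{n}$' to `trivial square-class obstruction on every irreducible factor $h_{i}$ of $f^{n-1}$'. Eventual stability controls how many factors must be handled, but for each factor the obstruction lives in a square-class group of a larger residue field, and one must argue that a single primitive prime of $c_{n}$ with odd valuation at the base is detected nontrivially there. This local-global alignment, carried out at the bad primes of $f$ and possibly requiring a mild $S$-integer enlargement, is the technical heart of the proof; once it is in hand the $abc$-hypothesis finishes the argument.
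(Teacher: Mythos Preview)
Your outline matches the paper's approach closely: the `if' direction is cited from \cite{jonessurvey}, and the `only if' direction combines eventual stability with an $abc$-based primitive-prime-divisor argument applied to each irreducible factor. The paper packages this as Theorem~\ref{thm:PPDsufficiency} (sufficient conditions in terms of a prime $\mathfrak{p}_n$ with $v_{\mathfrak{p}_n}(f_{N,j}(f^n(\gamma)))=1$ for each factor $f_{N,j}$) together with Proposition~\ref{prop: primitiveprime}, which uses $abc$ via the Belyi--Granville--Langevin-type estimate of Proposition~\ref{prop:abc-GNT} applied to $f_{N,j}\circ f^i$ for $i$ large enough that the degree is $\ge 8$.

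One point to sharpen: your claim that the discriminant computation ``reduces each of these local obstructions to a square-class condition in $K$ that depends only on $c_{n+1}$'' is not quite right, and you correctly sense this in your final paragraph. For a root $\alpha$ of the factor $h_i$, the relevant norm is $N_{K(\alpha)/K}(f(0)-\alpha)=h_i(c)$, so the obstruction attached to $h_i$ is governed by $h_i(c)$, not by $c_{n+1}=\prod_i h_i(c)$ as a whole. A single primitive prime of $c_{n+1}$ need not be visible in every $h_i(c)$. The paper's resolution is to run the $abc$ argument \emph{separately for each $j$}: Proposition~\ref{prop: primitiveprime} fixes $j$ and produces a prime $\mathfrak{p}\notin Z$ with $v_{\mathfrak{p}}(f_{N,j}(f^n(\gamma)))=1$ (note: valuation exactly one, which is stronger than ``odd'' and falls out directly from comparing $\sum_{v_\mathfrak{p}>0}N_\mathfrak{p}$ with $\sum_{v_\mathfrak{p}\ge 2}N_\mathfrak{p}$). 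Since $r$ is bounded, this per-factor prime suffices for Theorem~\ref{thm:PPDsufficiency}. Once you replace ``one primitive prime of $c_n$'' by ``for each factor $h_i$, a primitive prime of $h_i(c)$ of valuation one,'' your sketch is complete and coincides with the paper's proof.
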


	\begin{thm}{\label{thm:cubicpoly}} Assume the $abc$-Conjecture for number fields, and assume Vojta's Conjecture. Let $K$ be a number field, and let $f\in K[x]$ have degree~$3$. Then $[\textup{Aut}(T_\infty):G(f)]=\infty$ if and only if one of the following holds:
		
	\begin{enumerate}
		\item $f$ is PCF
		\item $f^n(x)$ is not eventually stable
		\item The finite critical points $\gamma_1,\gamma_2$ of $f$ have a relation of the form $f^r(\gamma_1)=f^r(\gamma_2)$ for some $r$.
	\end{enumerate}
		
	\end{thm}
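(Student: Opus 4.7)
The `if' direction is essentially standard: PCF forces a finite critical orbit and hence bounded ramification in the tower of preimage fields; failing eventual stability means $G(f)$ has unboundedly many orbits on sufficiently deep levels of the tree and so cannot have finite index; and $f^r(\gamma_1)=f^r(\gamma_2)$ causes certain discriminants past level $r$ to lie in a fixed coset of squares, dropping $G(f)$ into an infinite descending chain of index-$2$ subgroups. For the `only if' direction, I assume none of (1)--(3) holds. By eventual stability, there is a level $N$ past which $f^n(x)-\alpha$ has constant factorization type over $K$; after enlarging $K$ and replacing the root of the tree by a root of $f^N(x)-\alpha$, one reduces to the case where every $f^n(x)-\alpha$ is irreducible, so the preimage tree is genuinely $T_\infty$. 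In this setup, $[\Aut(T_\infty):G(f)]<\infty$ is equivalent to the maximality statement that $[K_{n+1}:K_n]=6^{3^n}$, up to a uniformly bounded defect, for all sufficiently large~$n$, where $K_n$ denotes the splitting field of $f^n(x)-\alpha$.

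The strategy is to detect maximality by a discriminant-type invariant $\delta_n$ in a quotient of $K_n^\times$ by squares, following the template of the quadratic case (Theorem~\ref{thm:quadraticpoly}). For cubic $f$, the discriminant of $f^n(x)-\alpha$ factors, modulo controlled contributions from lower levels, into terms of the form $f^k(\gamma_i)-\beta$ with $i\in\{1,2\}$ and $\beta$ ranging over roots of $f^{n-k}(x)-\alpha$; taking norms down to $K$, the leading new contribution at level $n$ is carried by $f^n(\gamma_1)-\alpha$ and $f^n(\gamma_2)-\alpha$. To force $\delta_n$ to be a nontrivial square class infinitely often, it suffices to produce, for infinitely many $n$, a prime dividing one of these two quantities to odd multiplicity that does not appear in the other critical orbit. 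The $abc$-Conjecture, combined with the non-PCF hypothesis (which provides the necessary canonical-height growth along each critical orbit), yields primitive prime divisors in each $f^n(\gamma_i)-\alpha$ separately at arbitrarily deep levels.

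The main obstacle, and the step requiring Vojta's Conjecture, is to ensure that the primitive primes produced for $\gamma_1$ and $\gamma_2$ do not systematically coincide. Coincidence is encoded by a growing gcd between $f^n(\gamma_1)-\alpha$ and $f^n(\gamma_2)-\alpha$, which fits into the Silverman--Vojta framework on a suitable blowup of $\PP^1\times\PP^1$ along divisors cutting out shared prime content. Applying Vojta's Conjecture to the orbit of $(\gamma_1,\gamma_2)$ under $f\times f$ on this blowup yields the needed gcd upper bound provided the orbit escapes the exceptional locus, and the hypothesis $f^r(\gamma_1)\ne f^r(\gamma_2)$ for all $r\ge 1$ is exactly what ensures this non-degeneracy. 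The hardest technical step is to specify the correct blowup so that the Vojta inequality translates cleanly into the required independence of prime factorizations, and to verify that the orbit of $(\gamma_1,\gamma_2)$ under $f\times f$ avoids every proper closed subscheme on which Vojta would be vacuous: this geometric input is the heart of why the non-collision hypothesis is the right condition, and it is where the bulk of the work lies.
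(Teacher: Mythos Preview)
Your overall plan---use $abc$ for primitive primes in each critical orbit and Vojta to keep the two orbits' prime supports disjoint---matches the paper's, but two steps are genuinely incomplete.

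First, tracking a single square-class invariant $\delta_n$ is not enough to force $[K_{n+1}:K_n]=6^{3^n}$. That condition only excludes $\Gal(K_{n+1}/K_n)$ from one index-$2$ subgroup, whereas you need the full wreath product $S_3^{3^n}$. The quadratic template works because $\Aut(T_\infty)$ is a pro-$2$ group there; for cubics it is not. The paper's mechanism (Theorem~\ref{thm:PPDsufficiency}) is finer: one needs a prime $\mathfrak{p}_n$ with $v_{\mathfrak{p}_n}(f_{N,j}(f^n(\gamma_1)))=1$ exactly (not merely odd), coprime to $d$ and to all earlier orbit values of both critical points, so that inertia above $\mathfrak{p}_n$ acts as a transposition on exactly one fiber $f(x)-\alpha_i$ and trivially on the rest; a normality argument then promotes this to $S_3$ on each fiber independently. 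Your ``odd multiplicity'' square-class framing does not supply this.

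Second, your claim that $f^r(\gamma_1)\ne f^r(\gamma_2)$ for all $r$ is \emph{exactly} what makes the $(f\times f)$-orbit of $(\gamma_1,\gamma_2)$ avoid the Vojta exceptional locus is not correct. The relevant gcd bound (Huang, Theorem~\ref{thm:Huang}) fails precisely when there exists $H\in\overline{\QQ}[x]$ commuting with an iterate of $f$ with $H(0)=0$ and $H(f^l(\gamma_1))=f^m(\gamma_2)$; this is strictly larger than the diagonal collision locus and includes, for instance, $f^i(\gamma_1)=f^j(\gamma_2)$ with $i\ne j$, as well as cases coming from nontrivial linear automorphisms of $f$. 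The paper resolves these via Ritt's classification of commuting polynomials (Proposition~\ref{prop:nongeneric}): depending on the shape of $H$ one lands either in the non-eventually-stable case, or in the unequal-index collision $f^i(\gamma_1)=f^j(\gamma_2)$ with $i\ne j$ (handled separately by Proposition~\ref{prop:grandorbitcollision}), or in a case where the gcd bound survives anyway. Without this Ritt-type analysis, your Vojta step has an unaddressed gap.
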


\begin{remark}
	Condition (iii) includes the case $\gamma_1=\gamma_2$.
\end{remark}
    
    We use the eventual stability condition in the above theorems in place of Jones' condition on the periodicity of the root. Conjecturally, these conditions are equivalent for number fields $K$ \cite[Conjecture 1.2]{joneslevy}.
    
    In order to prove Theorems \ref{thm:quadraticpoly} and \ref{thm:cubicpoly} we first prove a set of sufficient conditions for finite index, Theorem \ref{thm:PPDsufficiency}. We then use the $abc$-Conjecture and Vojta's Conjecture to prove that these conditions are met. Another key ingredient in the degree $3$ polynomial case is a result of Huang restricting common divisors in distinct orbits \cite{huang}. These arguments will not apply in the context of Jones' original conjecture of degree 2 rational functions since the results in \cite{huang} only apply to polynomials. 
    
    The sufficient conditions of Theorem \ref{thm:PPDsufficiency} and related result can be used to find examples of cubic polynomials and quadratic rational functions with finite index. In addition to providing known examples of cubic polynomials with this property, we give new a family of degree $2$ rational maps and prove that the Galois groups have finite index for several parameters (in fact, they will have index 1).

\begin{thm}\label{thm: partialresults1}
Consider the family \[
	f_b(z)=\frac{z^2-2bz+1}{(-2+2b)z}.
	\] For parameters $b\in \mathbb{Z}$ satisfying $b\equiv 2\mod 4$ and $b>0$ or $b\equiv 4\mod 8$,  $$[\Aut(T_\infty):~G(f_b)]=1$$  
    when $K=\mathbb{Q}$ and hence $$[\Aut(T_\infty):~G(f_b)]<\infty$$
    when $K$ is any number field.
\end{thm}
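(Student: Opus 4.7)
The plan is to apply Theorem~\ref{thm:PPDsufficiency} to the family $f_b$ after analyzing its critical dynamics and verifying the resulting square-class conditions through a $2$-adic argument. The first step is to locate the critical points of $f_b$: a direct computation gives $f_b'(z) = (z^2 - 1)/((2b-2)z^2)$, so the two finite critical points are $\gamma_1 = 1$ and $\gamma_2 = -1$, and one checks $f_b(1) = -1$, so the two critical orbits collapse into the single forward orbit of $\gamma_2$. Writing $c_n := f_b^n(-1)$, we have $c_0 = -1$, $c_1 = -(b+1)/(b-1)$, and in general $c_{n+1} = (c_n^2 - 2bc_n + 1)/((2b-2)c_n)$. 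Moreover, $\alpha = 0$ is strictly preperiodic (since $f_b(0) = \infty$ is a fixed point of $f_b$), and no nontrivial M\"obius transformation commuting with $f_b$ fixes $0$ when $b \neq 0$, which is a quick check by tracking the action on $\{\pm 1\}$; these observations verify the qualitative hypotheses of Theorem~\ref{thm:PPDsufficiency}.

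Next, I would invoke Theorem~\ref{thm:PPDsufficiency}, which for this merged-critical-orbit setup reduces the surjectivity of $\rho_\infty$ onto $\Aut(T_\infty)$ to showing that for each $n \geq 1$, the class of $c_n$ in $\QQ^*/(\QQ^*)^2$ is nontrivial and independent of the classes of $c_1, \ldots, c_{n-1}$. A standard way to certify this is to exhibit, for each $n$, a prime $p$ with $v_p(c_n)$ odd but $v_p(c_m)$ even for all $m < n$. For the congruence classes on $b$ specified in the theorem, the natural choice is $p = 2$, where one has enough rigidity to carry out an explicit induction rather than invoking a primitive-prime-divisor result (and in particular avoiding $abc$).

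The technical heart of the proof is a $2$-adic induction on $n$. Writing $b = 2m$ with $m$ odd in the first case ($b \equiv 2 \pmod 4$, $b > 0$), or $b = 4m$ with $m$ odd in the second ($b \equiv 4 \pmod 8$), I would track both $v_2(c_n)$ and the $2$-adic unit part of $c_n$ through the recursion $c_{n+1} = (c_n^2 - 2bc_n + 1)/((2b-2)c_n)$, establishing that $v_2(c_n)$ stays odd (or that $c_n$ is a nonsquare $2$-adic unit) for every $n \geq 1$. The positivity hypothesis $b > 0$ in the first regime controls the sign of $c_n$ at the archimedean place, preventing accidental square classes coming from the factor $-1$, while the stronger condition $b \equiv 4 \pmod 8$ in the second regime forces a different but analogous $2$-adic pattern. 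The main obstacle will be this inductive $2$-adic bookkeeping: the numerator $c_n^2 - 2bc_n + 1$ and the denominator $(2b-2)c_n$ each interact with $v_2$ in a way sensitive to both the unit part of $c_n$ and the exact valuation, so a careful case analysis seems unavoidable. Once the inductive claim is in hand, Theorem~\ref{thm:PPDsufficiency} yields $G(f_b) = \Aut(T_\infty)$ over $\QQ$. For a general number field $K \supset \QQ$, the image $G_K(f_b)$ is the restriction of $G_\QQ(f_b) = \Aut(T_\infty)$ via $\Gal(K^s/K) \hookrightarrow \Gal(\QQ^s/\QQ)$, so its index in $\Aut(T_\infty)$ is at most $[K:\QQ]$, hence finite.
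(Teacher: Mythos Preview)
Your proposal has two genuine gaps.

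First, Theorem~\ref{thm:PPDsufficiency} is stated and proved only for \emph{monic polynomials} $f\in K[x]$ of prime degree; it does not apply to the non-polynomial rational map $f_b$. The square-class criterion you attribute to it (independence of the $c_n$ in $\QQ^*/(\QQ^*)^2$) is not its content either, and the ``qualitative hypotheses'' you verify (strict preperiodicity of $0$, absence of commuting M\"obius maps) come from Conjecture~\ref{jonesconjecture}, not from Theorem~\ref{thm:PPDsufficiency}. The correct tool here is Theorem~\ref{jonesmanescor} (Jones--Manes), which furnishes maximality of $[K_n:K_{n-1}]$ once one exhibits, at each level $n$, a prime $\mathfrak{p}$ with $v_{\mathfrak{p}}(P_n(\gamma_1)P_n(\gamma_2))$ odd and $v_{\mathfrak{p}}$ vanishing on $\ell(P_1)$, $\ell(c)$, $\Res(Q_1,P_1)$, $\Disc(P_1)$, and all earlier $P_j(\gamma_i)$.

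Second, your plan to take $p=2$ at every level fails on all fronts. The prime $2$ lies in the bad locus: $\ell(c)=2(b-1)$, $\Res(Q_1,P_1)=4(b-1)^2$, and $\Disc(P_1)=4(b^2-1)$ are all even, so the side conditions of Theorem~\ref{jonesmanescor} are violated at $p=2$. Moreover one computes $v_2(P_n(-1))=v_2(Q_n(-1))=2^n-1$, so $v_2(c_n)=0$; and even if each $c_n$ were a $2$-adic nonsquare unit, a single prime cannot witness independence of infinitely many square classes, nor the primitivity condition $v_p(P_j(\pm1))=0$ for $j<n$. The paper does carry out a $2$-adic induction, but its purpose is different: writing $P_n(-1)=2^{2^n-1}u_n$ with $u_n$ odd, one shows $u_n\neq\pm y^2$ (this is where the hypotheses $b\equiv 2\pmod 4,\,b>0$ or $b\equiv 4\pmod 8$ enter). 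This forces the existence of an \emph{odd} prime $p$ with $v_p(P_n(-1))$ odd; a separate elementary argument (tracking common factors of $P_j(-1)$ and $Q_j(-1)$) then shows any odd prime dividing $P_n(-1)$ is automatically primitive and coprime to $2(b-1)(b+1)$, so Theorem~\ref{jonesmanescor} applies with this $p$. Your instinct that the $2$-adic structure is the key is right, but it is used indirectly to produce a good odd prime at each level, not as the witnessing prime itself.
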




\section{Toward a Serre-type open image theorem for arboreal representations}\label{section:openimage}

    
		
		
		
		


We begin by proving a set of sufficient conditions for $[\textup{Aut}(T_{\infty}):G(f)]<\infty$. We will use Capelli's Lemma and Dedekind's Discriminant Theorem in the proof.

\begin{lemma}[Capelli's Lemma]\label{lem:CapelliLemma} Let $K$ be a field, and $f(x), g(x) \in K[x]$.  Let $\alpha \in \bar{K}$ be a root of $g(x)$.  Then $g(f(x))$ is irreducible over $K$ if and only if both $g$ is irreducible over $K$ and $f(x) - \alpha$ is irreducible over $K(\alpha)$. 
\end{lemma}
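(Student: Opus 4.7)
The plan is to prove the biconditional in two stages: sufficiency follows from a tower-of-degrees calculation, while necessity is handled contrapositively, by lifting a factorization of $g$ or of $f(x)-\alpha$ to a factorization of the composite $g(f(x))$ over $K$.

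For sufficiency, the idea is to pick any root $\beta\in\bar{K}$ of $f(x)-\alpha$; then $f(\beta)=\alpha$, so $\beta$ is automatically a root of $g(f(x))$. The assumptions give $[K(\alpha):K]=\deg g$ and $[K(\beta):K(\alpha)]=\deg f$, and since $\alpha=f(\beta)\in K(\beta)$ the tower law yields
\[
[K(\beta):K]\;=\;\deg f\cdot\deg g\;=\;\deg\bigl(g(f(x))\bigr).
\]
Because $\beta$ satisfies $g(f(x))$, which has exactly this degree, $g(f(x))$ must coincide up to a scalar with the minimal polynomial of $\beta$ over $K$, and therefore is irreducible.

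For necessity I would argue by contrapositive. If $g=g_1g_2$ is a nontrivial factorization over $K$, then so is $g(f(x))=g_1(f(x))\,g_2(f(x))$. Otherwise $g$ is irreducible, and assuming $f(x)-\alpha=h_1(x)h_2(x)$ nontrivially over $K(\alpha)$, I would let $\sigma$ range over the $K$-embeddings of $K(\alpha)$ into $\bar{K}$ to get $f(x)-\sigma(\alpha)=h_1^\sigma(x)\,h_2^\sigma(x)$, and then multiply over all such $\sigma$ to recover (up to a constant $c$)
\[
g(f(x))\;=\;c\prod_\sigma\bigl(f(x)-\sigma(\alpha)\bigr)\;=\;c\Bigl(\prod_\sigma h_1^\sigma(x)\Bigr)\Bigl(\prod_\sigma h_2^\sigma(x)\Bigr),
\]
so that the two grouped Galois-orbit products are the desired factors.

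The main obstacle is the final step of the necessity argument: one must verify that the grouped products really do descend to $K[x]$ with positive degree. This uses that irreducibility of $g$ forces $\sigma(\alpha)$ to range over all Galois conjugates of $\alpha$, combined with the standard descent fact that a polynomial in $\bar{K}[x]$ fixed by $\Gal(K^s/K)$ lies in $K[x]$. In the number field applications of interest in this paper, separability is automatic, so no multiplicity subtleties intervene; in full generality one would count embeddings with the appropriate inseparability factor. The sufficiency direction is conceptually routine once $\beta$ is set up correctly.
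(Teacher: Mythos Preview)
The paper does not supply a proof of Capelli's Lemma; it is stated as a classical result and used as a black box. So there is nothing in the paper to compare your argument against.

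Your proposal is correct for the setting in which the lemma is actually applied here (number fields, hence separable extensions). The sufficiency direction via the tower law is standard and complete as written. For necessity, your descent step---that $\prod_\sigma h_i^\sigma(x)$ lands in $K[x]$ because $\Gal(K^s/K)$ permutes the embeddings $\sigma$---is correct in the separable case, and you also correctly observe that each grouped product has degree $(\deg g)(\deg h_i)$, strictly between $0$ and $\deg\bigl(g(f(x))\bigr)$, so the factorization is genuinely nontrivial.

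The only loose end is the one you already flag: over an arbitrary field $K$ the embedding-product argument needs to be replaced by something that handles inseparability. One clean route that avoids this entirely is the ring isomorphism
\[
K[x]/\bigl(g(f(x))\bigr)\;\cong\;K[x,y]/\bigl(g(y),\,y-f(x)\bigr)\;\cong\;K(\alpha)[x]/\bigl(f(x)-\alpha\bigr),
\]
valid whenever $g$ is irreducible; the left side is a field iff $g(f(x))$ is irreducible, and the right side is a field iff $f(x)-\alpha$ is irreducible over $K(\alpha)$. This works over any field with no separability hypotheses. For the purposes of this paper, however, your argument suffices as stated.
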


\begin{thm}[\cite{Koch}, p.100]{\label{thm:Deddisc}}
	Let $K\subset L$ be number fields, with rings of integers $\mathcal{O}_K$ and $\mathcal{O}_L$ respectively. Let $\mathfrak{p}\mathcal{O}_L=\prod\mathfrak{q}_i^{e_i}$, where $f_i=f(\mathfrak{q}_i|\mathfrak{p})$ is the inertial degree of $\mathfrak{q}_i$ over $\mathfrak{p}$. Then $\mathfrak{p}$ divides $\textup{Disc}(L)$ to multiplicity at least $\sum_i (e_i-1)f_i$, with equality if, for all $i$, the residue characteristic of $\mathfrak{p}$ does not divide $e_i$.
\end{thm}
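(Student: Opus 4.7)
The plan is to derive the discriminant valuation from the more refined invariant of the different ideal and then localize at each $\mathfrak{q}_i$. The backbone is the classical identity
\[
\mathrm{Disc}(L/K) = N_{L/K}(\mathfrak{d}_{L/K}),
\]
where $\mathfrak{d}_{L/K}$ denotes the different of $L/K$. Since $N_{L/K}(\mathfrak{q}_i)=\mathfrak{p}^{f_i}$ and the norm is multiplicative on ideals, this yields
\[
v_{\mathfrak{p}}\!\left(\mathrm{Disc}(L/K)\right) = \sum_i f_i \cdot v_{\mathfrak{q}_i}(\mathfrak{d}_{L/K}).
\]
Hence the whole theorem reduces to the local claim that $v_{\mathfrak{q}_i}(\mathfrak{d}_{L/K})\geq e_i-1$ for every $i$, with equality whenever the residue characteristic $p$ of $\mathfrak{p}$ does not divide $e_i$.

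Next I would pass to the completion $L_{\mathfrak{q}_i}/K_{\mathfrak{p}}$, exploiting the fact that the different commutes with localization and completion. Inserting the maximal unramified subextension gives a tower
\[
K_{\mathfrak{p}} \subset K_{\mathfrak{p}}^{\mathrm{ur}} \subset L_{\mathfrak{q}_i},
\]
in which the lower step contributes nothing to the different (the residue extension is separable), so by multiplicativity in towers one only needs to compute the different of the totally ramified extension $L_{\mathfrak{q}_i}/K_{\mathfrak{p}}^{\mathrm{ur}}$ of degree $e_i$. Choosing a uniformizer $\pi$ whose minimal polynomial $g(x)\in \mathcal{O}_{K_{\mathfrak{p}}^{\mathrm{ur}}}[x]$ is Eisenstein, the standard formula gives $\mathfrak{d}_{L_{\mathfrak{q}_i}/K_{\mathfrak{p}}^{\mathrm{ur}}} = (g'(\pi))$, so the problem becomes a direct valuation estimate on $g'(\pi) = e_i\pi^{e_i-1}+\sum_{j<e_i} j a_j \pi^{j-1}$.

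The main obstacle lives in the wild case. Every coefficient $a_j$ of an Eisenstein polynomial satisfies $v(a_j)\geq 1$, so each subleading term $j a_j \pi^{j-1}$ has valuation at least $e_i+j-1 \geq e_i$; meanwhile, the leading term $e_i\pi^{e_i-1}$ has valuation exactly $e_i-1$ precisely when $p\nmid e_i$. This gives equality $v_{\mathfrak{q}_i}(\mathfrak{d})=e_i-1$ in the tame regime with no possibility of cancellation, and the lower bound $v_{\mathfrak{q}_i}(\mathfrak{d})\geq e_i-1$ unconditionally. When $p\mid e_i$ the leading term now has valuation $\geq e_i$, so $v_{\mathfrak{q}_i}(\mathfrak{d})\geq e_i > e_i-1$, giving the strict inequality that justifies the ``equality if\dots'' phrasing. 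The delicate point, and the step I would write most carefully, is that in the wild case the precise valuation depends on the interaction between $v(e_i)$ and the other coefficients of $g$, but for the theorem one only needs a clean inequality, which avoids that subtlety.
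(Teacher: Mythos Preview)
The paper does not give its own proof of this statement: it is quoted as a classical result with a citation to Koch, and the text moves immediately to the next theorem. So there is no ``paper's proof'' to compare against.

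That said, your argument is the standard one and is correct. The reduction $\mathrm{Disc}(L/K)=N_{L/K}(\mathfrak{d}_{L/K})$ followed by localization/completion and the Eisenstein computation of $v_{\mathfrak{q}_i}(g'(\pi))$ is exactly the textbook route (and indeed is essentially what one finds in Koch or Serre). Two small remarks: first, your identity $v_{\mathfrak{p}}(\mathrm{Disc})=\sum_i f_i\, v_{\mathfrak{q}_i}(\mathfrak{d})$ uses that the different is supported only on ramified primes, so the sum really is finite and restricted to the $\mathfrak{q}_i$ over $\mathfrak{p}$; this is true but worth stating. Second, in the tame case you should note explicitly that the valuations of the subleading terms are all \emph{strictly} larger than $e_i-1$, so the ultrametric inequality forces $v(g'(\pi))=e_i-1$ on the nose; you say ``no possibility of cancellation'' which is the right idea but could be made one line sharper.
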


\begin{thm}{\label{thm:PPDsufficiency}}
	Let $K$ be a number field, and let $f\in K[x]$ be a monic polynomial of degree $d\ge 2$, where $d$ is prime. Suppose $f^n(x)$ has at most $r$ irreducible factors over $K$ as $n\to\infty$, so that \[f^{N+n}(x)=f_{N,1}(f^n(x))f_{N,2}(f^n(x))\cdots f_{N,r}(f^n(x))\] is the prime factorization of $f^{N+n}(x)$ in $K[x]$ for any sufficiently large $N$. Suppose that there is an $M$ such that the following holds: for each $n\ge M$, and for each $1\le j\le r$, there is a multiplicity one critical point $\gamma_1$ of $f$ and a prime $\mathfrak{p}_n$ of $K$ such that:
	
	\begin{itemize}
		\item $v_{\mathfrak{p}_n}(f_{N,j}(f^n(\gamma_1)))=1$
		\item $v_{\mathfrak{p}_n}(f^m(\gamma_1))=0$ for all $m<n+N$
		\item $v_{\mathfrak{p}_n}(f^m(\gamma_t))=0$ for all critical points $\gamma_t$ of $f$ with $\gamma_t\ne\gamma_1$ and all $m\le n+N$
		\item $v_{\mathfrak{p}_n}(d)=0.$
	\end{itemize}

Then $[\textup{Aut}(T_{\infty}):G(f)]<\infty$.\end{thm}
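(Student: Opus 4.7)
The plan is to show by induction on $n \geq M$ that, for each $j$, the relative Galois group $\Gal(K_{n,j}/K_{n-1,j})$ equals the full wreath factor $(S_d)^{d^{n-1}\deg f_{N,j}}$, where $K_{n,j}$ denotes the splitting field over $K$ of the irreducible polynomial $f_{N,j}(f^n(x))$. This level-wise maximality, combined with the finite data at initial levels and the finite-index restriction imposed by the level-$N$ partition, will give $[\Aut(T_\infty):G(f)]<\infty$.

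The technical core is a discriminant-resultant computation. Over $K_{n-1,j}$, $f_{N,j}(f^n(x))$ factors as $\prod_\beta(f(x)-\beta)$ with $\beta$ ranging over roots of $g:=f_{N,j}(f^{n-1}(x))$; since $f$ is monic,
\[
\Disc(f(x)-\beta)=\pm d^d\prod_\gamma(f(\gamma)-\beta)^{m_\gamma},
\]
with $\gamma$ ranging over distinct critical points of multiplicity $m_\gamma$, and multiplying over $\beta$,
\[
\prod_\beta \Disc(f(x)-\beta)=\pm d^{d\deg g}\prod_\gamma f_{N,j}(f^n(\gamma))^{m_\gamma}.
\]
The hypotheses force this quantity to have $\mathfrak{p}_n$-valuation exactly $1$: the valuation-$1$ condition on $\gamma_1$ with $m_{\gamma_1}=1$ contributes $1$; the identity $f^{n+N}(\gamma_t)=\prod_{j'}f_{N,j'}(f^n(\gamma_t))$ combined with the vanishing of $v_{\mathfrak{p}_n}(f^m(\gamma_t))$ through level $n+N$ forces zero contribution from the remaining critical points; and $v_{\mathfrak{p}_n}(d)=0$ removes the leading constant. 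The same hypotheses, applied to the analogous level-$(n-1)$ computation, show $\mathfrak{p}_n$ does not divide $\Disc(f_{N,j}(f^{n-1}(x)))$, hence is unramified in $K_{n-1,j}/K$. Fixing a prime $\mathfrak{P}$ of $K_{n-1,j}$ above $\mathfrak{p}_n$ then isolates a unique root $\beta_0$ with $v_\mathfrak{P}(\Disc(f(x)-\beta_0))=1$.

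By Dedekind's theorem (tame since $v_{\mathfrak{p}_n}(d)=0$), inertia at $\mathfrak{P}$ in the splitting field of $f(x)-\beta_0$ has order $2$ and acts as a transposition on the $d$ roots. Ramification at $\mathfrak{P}$ forces $K_{n,j}\supsetneq K_{n-1,j}$; combining Capelli's lemma, irreducibility of $f_{N,j}(f^n(x))$, and primality of $d$, one deduces that $f(x)-\beta_0$ must be irreducible over $K_{n-1,j}$ (the only alternative for prime $d$, namely complete splitting, would contradict the nontrivial ramification). Hence the Galois group of its splitting field is a transitive subgroup of $S_d$ containing a transposition, so equals $S_d$. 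To promote this to $H:=\Gal(K_{n,j}/K_{n-1,j})=\prod_\beta S_d$, note that $\mathfrak{P}$ is unramified in each of the other degree-$d$ factor extensions, placing the inertia inside $H\le\prod_\beta S_d$ as the single-position element $(1,\ldots,\tau,\ldots,1)$; conjugation by elements of $H$ whose $\beta_0$-component ranges over $S_d$ produces every transposition in position $\beta_0$, hence the full $S_d$ subgroup fixing the other positions, and further conjugation by lifts from $\Gal(K_{n-1,j}/K)$ (transitive on $\beta$'s by irreducibility of $g$) translates these commuting $S_d$'s to every position, filling the product. The main obstacle is the final aggregation: leveraging the level-wise maximality for each $j$ into finite index in the full $\Aut(T_\infty)$, which requires checking the finite-index restriction imposed by the level-$N$ partition together with a Goursat-style argument that the projections of $G(f)$ onto the distinct $j$-subtrees are jointly independent up to finite index.
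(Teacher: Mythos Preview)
Your core ramification argument---computing the discriminant product, isolating a single $\beta_0$ with odd-valuation discriminant at a prime $\mathfrak{P}$ above $\mathfrak{p}_n$, extracting a transposition from inertia via Dedekind, and invoking transitivity plus a transposition to get $S_d$---is correct and matches the paper's. Your alternative route to irreducibility of $f(x)-\beta_0$ over $K_{n-1,j}$ (using that $K_{n-1,j}/K(\beta_0)$ is Galois and $d$ is prime, so the degree is $1$ or $d$, ruled out by ramification) is a legitimate variant of the paper's two-step argument (Capelli over $K(\alpha_i)$, then normality to pass to $K_{N+n-1}$). Your conjugation argument for filling out $\prod_\beta S_d$ inside $\Gal(K_{n,j}/K_{n-1,j})$ is also fine.

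The real divergence is structural: you work tower-by-tower in the factor splitting fields $K_{n,j}$ and then face the ``main obstacle'' of aggregating across $j$. The paper avoids this entirely by working from the start with the full splitting field $K_{N+n}$. Concretely, for each root $\alpha_i$ of $f^{N+n-1}$ (in whatever factor), the paper sets $M_i=K(f^{-1}(\alpha_i))$ and $\widehat{M_i}=K_{N+n-1}\prod_{k\ne i}M_k$, then shows $\Gal(K_{N+n}/\widehat{M_i})\cong S_d$ by verifying that the chosen prime $\mathfrak{p}_n$ is unramified in \emph{every} $M_k$ with $k\ne i$---including those coming from the other factors $f_{N,j'}$. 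This uses the hypotheses to show $\mathfrak{p}_n\nmid\Disc(f^{N+n-1})$ and, for $k\ne i$, that $\mathfrak{P}\mid\Disc(f(x)-\alpha_k)$ would force either $\mathfrak{P}\mid(\alpha_i-\alpha_k)\mid\Disc(f^{N+n-1})$ or $\mathfrak{P}\mid f^{n+N}(\gamma_t)$ for some $t\ne 1$, both contradictions. That single step yields $\Gal(K_{N+n}/K_{N+n-1})\cong S_d^{d^{N+n-1}}$ directly, with no Goursat bookkeeping.

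Your Goursat sketch is not wrong in spirit, but to complete it you would need precisely this cross-factor unramification: that $\mathfrak{p}_{n,j}$ is unramified in $K_{n,j'}K_{N+n-1}$ for $j'\ne j$. Note the hypotheses do \emph{not} directly bound $v_{\mathfrak{p}_n}(f_{N,j'}(f^n(\gamma_1)))$ for $j'\ne j$, so this requires the paper's argument via $\Disc(f^{N+n-1})$ rather than a na\"ive factor-by-factor check. Once you supply that, your aggregation goes through---but at that point you have essentially reproduced the paper's proof inside a more complicated framework. The cleaner move is to drop the per-$j$ towers and run your ramification/conjugation argument over $K_{N+n-1}$ from the outset.
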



\begin{remark}
	Theorem \ref{thm:PPDsufficiency} does not apply to unicritical polynomials of degree at least 3, as the critical point $\gamma_1$ is required to be of multiplicity one. When $f$ is unicritical with $d\ge 3$, it is easy to see that $[\textup{Aut}(T_{\infty}):G(f)]=\infty$. In this case, $f$ is conjugate to $x^d+c$ and we can see that $G(f)$ is isomorphic to a subgroup of the infinitely iterated wreath product of $C_d$ with itself, which has infinite index in $\Aut(T_\infty)$.
\end{remark}


We will make use of the following discriminant formulas from \cite{AitHajMai2005}. Let $\psi\in K[x]$ be of degree $d$ with leading coefficient $\alpha$. \begin{equation*}{\label{discformula}}\tu{Disc}_x(\psi(x)-t)=(-1)^{(d-1)(d-2)/2}d^d\alpha^{d-1}\prod_{b\in R_{\psi}} (t-\psi(b))^{e(b,\psi)} \end{equation*} where $R_{\psi}$ denotes the set of critical points of $\psi$, $e(b,\psi)$ denotes the multiplicity of the critical point $b$, and $t$ is in $K$. From this we obtain \begin{equation*}\tu{Disc}_x(\psi^n(x)-t)=(-1)^{(d^n-1)(d^n-2)/2}d^{nd^n}\alpha^{(d^n-1)/(d-1)}\prod_{c\in R_{{\psi}^n}}(t-\psi^n(c))^{e(c,\psi^n)}\end{equation*} which is equal to \begin{equation}{\label{eqn:disciteratesformula}}(-1)^{(d^n-1)(d^n-2)/2}d^{nd^n}\alpha^{(d^n-1)/(d-1)}\prod_{b\in R_{\psi}}\prod_{i=1}^n(t-\psi^i(b))^{e(b,\psi)}.\end{equation}

\begin{proof}[Proof of Theorem $\ref{thm:PPDsufficiency}$]
	For all $n\ge N$, let $S_{n,j}$ be the set of roots of $f^{N+n}(x)$ whose defining polynomial over $K$ is $f_{N,j}(f^n(x))$. Assume without loss of generality that all of the $f_{N,i}$ are monic. Let $\alpha_i\in S_{n-1,j}$. Then \[N_{K(\alpha)/K}(f(\gamma_1)-\alpha_i)=f_{N,j}(f^{n-1}(f(\gamma_1)))=f_{N,j}(f^n(\gamma_1)).\] We thus see that if for all $l\ne 1$ we have
	$$v_{\mathfrak{p}_n}(d)=0,\ v_{\mathfrak{p}_n}(f_{N,j}(f^n(\gamma_1)))=1,\ v_{\mathfrak{p}_n}(f_{N,j}(f^n(\gamma_l)))=0,$$
then $v_{\mathfrak{p}}(\textup{Disc}(f(x)-\alpha_i))=1$ for some prime $\mathfrak{p}$ of $K(\alpha_i)$ lying above $\mathfrak{p}_n$. 
	
	By Lemma \ref{lem:CapelliLemma}, $f(x)-\alpha_i$ is irreducible over $K(\alpha_i)$, so $f(x)-\alpha_i$ is the defining polynomial of $K(\beta)/K(\alpha_i)$, where $\beta$ is some pre-image of $\alpha_i$ under $f$. By Theorem \ref{thm:Deddisc}, since $d$ is prime, we have $e(\mathfrak{q}\vert\mathfrak{p})=2$ for any prime $\mathfrak{q}$ of the Galois closure $M_i=K(f^{-1}(\alpha_i))$ of $K(\beta)/K(\alpha_i)$ lying above $\mathfrak{p}$. In fact, $I(\mathfrak{q}|\mathfrak{p})$ acts as a transposition on the roots of $f(x)-\alpha_i$. By a  standard theorem attributed to Jordan~\cite{Isaacsgrouptheory}, if $G$ is a primitive permutation group which is a subgroup of $S_d$ and $G$ contains a transposition then $G=S_d$. Since $d$ is prime any transitive subgroup of $S_d$ is primitive. It follows that $\Gal(M_i/K(\alpha_i))\cong S_d$.  
	
	Let $\mathfrak{Q}$ be any prime of $K_{N+n-1}M_i$ lying above $\mathfrak{q}$, and let $\mathfrak{P}=\mathfrak{Q}\cap K_{N+n-1}$. Our hypotheses on $\mathfrak{p}_n$, along with (\ref{eqn:disciteratesformula}), imply that $\mathfrak{p}_n$ does not ramify in $K_{N+n-1}$. We thus have $e(\mathfrak{P}|\mathfrak{p})=1$, which forces $e(\mathfrak{Q}|\mathfrak{q})=1$ as well. Hence $e(\mathfrak{Q}|\mathfrak{P})=2$. Since any non-trivial element of $I(\mathfrak{Q}|\mathfrak{P})$ descends to a non-trivial element of $I(\mathfrak{q}|\mathfrak{p})$ with the same action on the roots of $f(x)-\alpha_i$, the non-trivial element of $I(\mathfrak{Q}|\mathfrak{P})$ must act as a transposition on these roots. As $K_{N+n-1}$ is a Galois extension of $K(\alpha_i)$, $\Gal(K_{N+n-1}M_i/K_{N+n-1})$ is a normal subgroup of $\Gal(M_i/K(\alpha_i))\cong S_d$. But a normal subgroup of $S_d$ containing a transposition must be $S_d$, so we conclude that 
$$\Gal(K_{N+n-1}M_i/K_{N+n-1})\cong S_d.$$ 
	
	Now let $\widehat{M_i}=K_{N+n-1}\prod_{j\ne i}M_j$. Let $\mathfrak{Q}'$ be a prime of $K_n$ lying above $\mathfrak{Q}$, and let $\mathfrak{P}'$ be the prime of $\widehat{M_i}$ lying below $\mathfrak{Q}'$. We know that $\mathfrak{P}$ cannot divide $\textup{Disc}(f(x)-\alpha_k)$ for any root $\alpha_k$ of $f^{N+n-1}(x)$ with $k\ne i$; otherwise, $\mathfrak{P}$ divides either $f(\gamma_1)-\alpha_k$ or $f(\gamma_t)-\alpha_k$ for some $t\ne1$. In the former case, $\mathfrak{P}$ then divides $\alpha_i-\alpha_k\mid\textup{Disc}(f^{N+n-1})$, contradicting the hypotheses on $\mathfrak{p}_n$. In the latter case, $\mathfrak{P}$ divides 
$$N_{K(\alpha_j)/K}(f(\gamma_t)-\alpha_k)\mid f^{n+N}(\gamma_t),$$ 
also contradicting our hypotheses on $\mathfrak{p}_n$. Therefore $e(\mathfrak{P}'|\mathfrak{P})=1$. This forces $e(\mathfrak{Q}'|\mathfrak{Q})=1$, and so $e(\mathfrak{Q}'|\mathfrak{P}')=2$. 

	By a similar argument as above, we conclude that $\Gal(K_{N+n}/\widehat{M_i})$ contains a transposition, as it is a normal subgroup of $\Gal(K_{N+n-1}M_i/K_{N+n-1})\cong S_d$. We obtain 
$$\Gal(K_{n+N}/\widehat{M_i})\cong S_d,$$
and so $\Gal(K_{n+N}/K_{N+n-1})\cong S_d^m$, where $m=\textup{deg}(f^{N+n-1})=d^{N+n-1}$. Since this holds for all sufficiently large $n$, $[\textup{Aut}(T_\infty):G(f)]<\infty$. \end{proof}

We will also make use of the $abc$-Conjecture in the proofs of Theorem \ref{thm:quadraticpoly} and Theorem \ref{thm:cubicpoly}. For $(z_1,\dots,z_n)\in K^n\backslash \{(0,\dots,0)\}$ with $n\ge 2$, let $N_\mathfrak{p}=\fr{\log(\#k_\mathfrak{p})}{[K:\QQ]}$, where $k_\mathfrak{p}$ is the residue field of $\mathfrak{p}$, we define the \textit{height} of the $n$-tuple $(z_1,\dots,z_n)$ by 
\begin{align*}
h(z_1,\dots,z_n)&=\sum_{\textup{primes }\mathfrak{p} \textup{ of } \mathcal{O}_K} -\min\{v_{\mathfrak{p}}(z_1),\dots,v_{\mathfrak{p}}(z_n)\}N_{\mathfrak{p}}\\
&~+\dfrac{1}{[K:\QQ]} \sum_{\sigma:K\hookrightarrow\CC} \max\{\textup{log}|\sigma(z_1)|,\dots,\textup{log}|\sigma(z_n)|\}.
\end{align*} For any $(z_1,\dots,z_n)\in (K^*)^n$, $n\ge 2$, we define \[I(z_1,\dots,z_n)=\{\textup{primes }\mathfrak{p} \textup{ of } \mathcal{O}_K\mid v_{\mathfrak{p}}(z_i)\ne v_{\mathfrak{p}}(z_j)\textup{ for some } 1\le i,j\le n\}\] and let \[\textup{rad}(z_1,\dots,z_n)=\sum_{\mathfrak{p}\in I(z_1,\dots,z_n)} N_{\mathfrak{p}}.\] 

\begin{conj}[$abc$-Conjecture for number fields] Let $K$ be a number field. For any $\epsilon>0$, there exists a constant $C_{K,\epsilon}>0$ such that for all $a,b,c\in K^*$ satisfying $a+b=c$, we have \[h(a,b,c)< (1+\epsilon)(\textup{rad}(a,b,c))+C_{K,\epsilon}.\]
\end{conj}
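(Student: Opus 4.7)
The final statement is the Masser--Oesterl\'{e} \emph{$abc$-conjecture} over number fields, one of the most celebrated open problems in modern Diophantine geometry. It is \emph{assumed} as a hypothesis in Theorems \ref{thm:quadraticpoly} and \ref{thm:cubicpoly} rather than proved, and no generally accepted proof is known. A genuine proof plan is therefore out of reach; what I can offer is an outline of the principal attacks that have been attempted and the obstruction each encounters.

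A first natural angle is reduction to Vojta's Conjecture, which the paper already invokes elsewhere. Vojta's Conjecture for $\mathbb{P}^1$ relative to the divisor $D = \{0,1,\infty\}$ over a number field $K$ is in fact \emph{equivalent} to the $abc$-conjecture over $K$: an $S$-integral point of $\mathbb{P}^1\setminus D$ corresponds, after clearing denominators, to a solution of $a+b=c$ whose supports are constrained, and Vojta's height bound specializes to exactly $h(a,b,c)\le (1+\epsilon)\operatorname{rad}(a,b,c)+C_{K,\epsilon}$. So the first step in any such plan would be to prove Vojta's Conjecture for $\mathbb{P}^1$ punctured at three points --- but this instance is itself equivalent to $abc$, so this reduction merely shifts the problem without solving it.

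A second, more structural angle goes through the Szpiro conjecture on semistable elliptic curves, which over $\mathbb{Q}$ is polynomially equivalent to $abc$ (with modified constants). The plan would be to attach to a triple $(a,b,c)$ a Frey-type curve $E_{a,b,c}\colon y^2 = x(x-a)(x+b)$, bound its minimal discriminant above by $a\cdot b\cdot c$, bound its conductor above by $\operatorname{rad}(abc)$, and deduce the inequality from a conductor--discriminant bound extracted from modularity and level-lowering. Over $\mathbb{Q}$ this link is classical; extending it to an arbitrary number field $K$ requires a clean theory of semistable reduction, minimal Weierstrass models, and modularity results over general $K$, each of which imports major additional difficulty. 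Mochizuki's Inter-universal Teichm\"{u}ller program provides an entirely different framework via anabelian geometry, but its status remains a matter of active dispute.

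The main obstacle is common to all these approaches: each ultimately demands a \emph{uniform} bound on heights that is linear in the radical, and the best unconditional results --- via Stewart--Tijdeman--Yu through Baker's theory of linear forms in logarithms --- give only estimates of the shape $h(a,b,c)\ll \operatorname{rad}(a,b,c)^{1/3}(\log \operatorname{rad})^{3}$, exponentially weaker than what is required. Closing this gap is the heart of the difficulty, and no technique from transcendence theory, analytic number theory, or Arakelov geometry has yet come close to producing the requisite linear bound, which is why the statement appears in the paper as a hypothesis rather than a theorem.
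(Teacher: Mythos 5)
You are right that this statement is the Masser--Oesterl\'e $abc$-conjecture for number fields, which the paper states only as a hypothesis for Theorems \ref{thm:quadraticpoly} and \ref{thm:cubicpoly} and does not prove (nor could it, as the conjecture is open); there is no proof in the paper to compare against. Your decision not to manufacture an argument, and instead to survey the known equivalences (Vojta for $\mathbb{P}^1\setminus\{0,1,\infty\}$, Szpiro) and the gap left by the Stewart--Tijdeman--Yu bounds, is the correct response here.
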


\begin{prop}{\label{prop:abc-GNT}} Let $K$ be a number field, and assume the $abc$-Conjecture for $K$. Let $F\in K[x]$ be a separable polynomial of degree $D\ge 3$. Then for every $\epsilon>0$, there is a constant $C_\epsilon$ such that for every $\gamma\in K$ and every $n\ge 1$ with $F(\gamma)\ne0$, \[\sum_{v_\mathfrak{p}(F(\gamma))>0} N_\mathfrak{p}\ge (D-2-\epsilon)h(\gamma)+C_\epsilon.\] \end{prop}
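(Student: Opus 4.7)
The plan is to deduce the desired bound from the $abc$-Conjecture after passing to the splitting field of $F$. Let $K' = K(\alpha_1, \ldots, \alpha_D)$, where $\alpha_1, \ldots, \alpha_D$ are the distinct roots of $F$ in $\bar{K}$. For each $i$, define the truncated counting function
\[
R_i := \sum_{\mathfrak{P}:\ v_\mathfrak{P}(\gamma - \alpha_i) > 0} N_\mathfrak{P},
\]
with $\mathfrak{P}$ ranging over primes of $\mathcal{O}_{K'}$. Outside a finite set of bad primes (those dividing the leading coefficient of $F$ or a difference $\alpha_i - \alpha_j$), distinctness of the $\alpha_i$ modulo $\mathfrak{P}$ forces at most one $v_\mathfrak{P}(\gamma - \alpha_i)$ to be positive at each $\mathfrak{P}$; combining this with $\sum_{\mathfrak{P}\mid\mathfrak{p}} N_\mathfrak{P} \le N_\mathfrak{p}$ (an equality when $\mathfrak{p}$ is unramified in $K'$) gives
\[
\sum_{v_\mathfrak{p}(F(\gamma)) > 0} N_\mathfrak{p} \ \ge\ \sum_{i=1}^D R_i - O(1),
\]
where the $O(1)$ depends only on $F$. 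It therefore suffices to prove $\sum_i R_i \ge (D-2-\epsilon)\,h(\gamma) + C_\epsilon$.

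For the case $D = 3$, a single application of the $abc$-Conjecture over $K'$ does the job. Consider the vanishing identity
\[
(\alpha_2 - \alpha_3)(\gamma - \alpha_1) + (\alpha_3 - \alpha_1)(\gamma - \alpha_2) + (\alpha_1 - \alpha_2)(\gamma - \alpha_3) = 0,
\]
call the summands $u, v, w$, and note that the morphism $\gamma \mapsto (u:v:w)$ is a degree-one embedding of $\PP^1$ into the line $\{u+v+w = 0\} \subset \PP^2$, so $h(u,v,w) = h(\gamma) + O(1)$. Away from the bad primes, $I(u,v,w)$ consists exactly of those $\mathfrak{P}$ at which some $v_\mathfrak{P}(\gamma - \alpha_i)$ with $i \in \{1,2,3\}$ is positive, so $\mathrm{rad}(u,v,w) \le R_1 + R_2 + R_3 + O(1)$. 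The $abc$-Conjecture then delivers $h(\gamma) \le (1+\delta)(R_1 + R_2 + R_3) + O(1)$, which is the desired bound for $D = 3$ after taking $\delta < \epsilon$.

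For $D \ge 4$ the three-subset bound alone is insufficient: summing it over all $\binom{D}{3}$ triples yields only $\sum_i R_i \ge (D/3)\,h(\gamma) - O(1)$, which misses the target by a linear factor in $D$. To recover the sharper $(D-2)$ coefficient, the plan is to invoke Belyi's theorem to produce a rational function $\beta$ on $\PP^1$ with critical values contained in $\{0, 1, \infty\}$ and with $\{\alpha_1, \ldots, \alpha_D\} \subseteq \beta^{-1}(\{0,1,\infty\})$. Writing $d = \deg \beta$, Riemann--Hurwitz gives $|\beta^{-1}(\{0,1,\infty\})| = d + 2$, so there are $d + 2 - D$ extra points $P_1, \ldots, P_{d+2-D}$ in the fiber besides the $\alpha_i$. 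Applying the $abc$-Conjecture to $\beta(\gamma) + (1 - \beta(\gamma)) = 1$, using the functoriality of heights ($h(\beta(\gamma)) = d\,h(\gamma) + O(1)$) and the trivial bound $N^{(1)}(\gamma, P_j) \le h(\gamma - P_j) \le h(\gamma) + O(1)$ for each extra preimage, one obtains
\[
d\,h(\gamma) \le (1+\delta)\Bigl(\sum_{i=1}^D R_i + (d + 2 - D)\,h(\gamma)\Bigr) + O(1).
\]
Rearranging, the coefficient of $h(\gamma)$ on the right becomes $(D - 2 - \delta(d + 2 - D))/(1+\delta)$, which exceeds $D - 2 - \epsilon$ once $\delta$ is chosen sufficiently small.

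The main obstacle lies in the $D \ge 4$ case: one must invoke Belyi's theorem, and one must apply $abc$ over the splitting field $K'$, which implicitly requires the $abc$-Conjecture for arbitrary number fields (not merely for $K$ itself). For the applications in this paper the proposition is used only for $D = 3$ (cubic polynomials in Theorem~\ref{thm:cubicpoly}), for which the three-subset $abc$ argument above is already sufficient.
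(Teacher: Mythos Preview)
The paper does not give its own proof of this proposition; it simply cites Proposition~3.4 of \cite{GraNguTuc2013}. Your argument via Belyi's theorem and $abc$ is the standard one (essentially Elkies' argument) and is, up to cosmetic differences, what appears in that reference. So the overall strategy is correct and matches the literature.

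Two points deserve correction. First, and most importantly, your closing claim that the proposition is used in this paper only for $D=3$ is false. In the proof of Proposition~\ref{prop: primitiveprime} the polynomial to which Proposition~\ref{prop:abc-GNT} is applied is $F=f_{N,j}\circ f^i$, and $i$ is chosen precisely so that $\deg F = D \ge 8$; this is invoked in the proofs of both Theorem~\ref{thm:quadraticpoly} and Theorem~\ref{thm:cubicpoly}. So the Belyi step for $D\ge 4$ is genuinely needed, and the three-term identity alone does not suffice for the paper's purposes.

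Second, your worry about requiring $abc$ over the splitting field $K'$ rather than $K$ is real but avoidable. Since the roots of $F$ form a $\Gal(\bar K/K)$-stable set, Belyi's algorithm can be run $\Gal(\bar K/K)$-equivariantly to produce $\beta\in K(x)$; one then applies $abc$ over $K$ to the relation $\beta(\gamma)+(1-\beta(\gamma))=1$, bounding the radical by $\sum_{v_\mathfrak{p}(F(\gamma))>0}N_\mathfrak{p}$ plus the contribution from the $K$-rational polynomial whose roots are the remaining $d+2-D$ preimages, and your rearrangement goes through verbatim. In any case the main theorems assume the $abc$-Conjecture for all number fields, so the distinction is immaterial for the applications here.
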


\begin{proof}
	See Proposition 3.4 in \cite{GraNguTuc2013}.
\end{proof}

\begin{prop}[cf.~Proposition 5.1 of \cite{GraNguTuc2013}]{\label{prop:smallimprimitive}}
	Let $K$ be a number field, and let $f\in K[x]$ be of degree $d\ge 2$. Let $\alpha\in K$ have infinite forward orbit under $f$. Let $Z$ denote the set of primes of $\mathcal{O}_K$ such that $\min(v_\mathfrak{p}(f^m(\alpha)), v_\mathfrak{p}(f^n(\alpha)))>0$ for some $m<n$ such that $f^m(\alpha)\ne 0$. Then for any $\delta>0$, there exists an integer $N$ such that \[\sum_{\mathfrak{p}\in Z} N_\mathfrak{p}\le \delta h(f^n(\alpha))\] for all $n\ge N$ with $f^n(\alpha)\ne0$.
\end{prop}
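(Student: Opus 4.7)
The plan is to partition the set $Z$ (which depends implicitly on $n$ through the condition ``for some $m<n$'') into three pieces and bound each separately. The essential tools are a mod-$\mathfrak{p}$ reduction argument and Call--Silverman canonical heights, giving $h(f^k(\beta)) = d^k \hat{h}_f(\beta) + O(1)$; here $\hat{h}_f(\alpha) > 0$ since $\alpha$ has infinite forward orbit.

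First, collect the bad primes into a finite set $S$: those at which $f$ fails to be $\mathfrak{p}$-integral, those at which $\alpha$ fails to be $\mathfrak{p}$-integral, and those dividing $d$. Then $\sum_{\mathfrak{p} \in S} N_\mathfrak{p}$ is a fixed constant. For any $\mathfrak{p} \notin S$ with $\mathfrak{p} \in Z$, pick some $m < n$ with $v_\mathfrak{p}(f^m(\alpha)) > 0$; reducing modulo $\mathfrak{p}$, one has $f^n(\alpha) = f^{n-m}(f^m(\alpha)) \equiv f^{n-m}(0) \pmod{\mathfrak{p}}$, and since also $v_\mathfrak{p}(f^n(\alpha)) > 0$, this forces $\mathfrak{p} \mid f^{n-m}(0)$. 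Fix a parameter $M$ (to be chosen large in terms of $\delta$), let $m(\mathfrak{p})$ be the smallest such $m$ for each $\mathfrak{p} \in Z \setminus S$, and partition $Z \setminus S = Z_{\mathrm{early}} \cup Z_{\mathrm{late}}$ according to whether $m(\mathfrak{p}) \le M$.

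Every prime in $Z_{\mathrm{early}}$ divides $f^m(\alpha)$ for some $m \le M$ with $f^m(\alpha) \ne 0$, so $\sum_{\mathfrak{p} \in Z_{\mathrm{early}}} N_\mathfrak{p} \le \sum_{m=0}^{M} h(f^m(\alpha))$, a constant depending only on $M$. Every prime in $Z_{\mathrm{late}}$ divides $f^k(0)$ for some $1 \le k \le n - M - 1$, so
\[
\sum_{\mathfrak{p} \in Z_{\mathrm{late}}} N_\mathfrak{p} \;\le\; \sum_{k=1}^{n-M-1} h(f^k(0)) \;=\; O(d^{\,n-M}),
\]
using the elementary fact $\sum_{\mathfrak{p}\mid z} N_\mathfrak{p} \le \sum_\mathfrak{p} v_\mathfrak{p}(z)\, N_\mathfrak{p} \le h(z)$ for $z$ integral, together with Call--Silverman. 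Because $h(f^n(\alpha)) \gg d^n$, the ratio $\sum_{\mathfrak{p} \in Z_{\mathrm{late}}} N_\mathfrak{p}/h(f^n(\alpha))$ is $O(d^{-M})$. Choose $M$ so this is below $\delta/2$, then take $n$ large enough that the contributions from $S$ and $Z_{\mathrm{early}}$ together fall below $(\delta/2)\, h(f^n(\alpha))$.

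The main obstacle is keeping the implicit constants in Call--Silverman uniform so that the iterated limits in $M$ and in $n$ compose cleanly. In particular one must handle the case $\hat{h}_f(0) = 0$ (where $h(f^k(0))$ is bounded and the sum grows only linearly in $k$) just as gracefully as the generic case $\hat{h}_f(0) > 0$ (where the $d^{\,n-M}$ bound is sharp); both are absorbed by the exponential growth of $h(f^n(\alpha))$. Everything else is bookkeeping.
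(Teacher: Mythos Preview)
Your argument is correct and rests on the same key observation as the paper: if $\mathfrak{p}$ divides both $f^m(\alpha)$ and $f^n(\alpha)$ with $m<n$, then (away from finitely many bad primes) $\mathfrak{p}$ divides $f^{n-m}(0)$. The difference is in how the split is made. You introduce an auxiliary parameter $M$, separate primes by whether the earliest $m$ is at most $M$, and then perform a two-stage limit (first $M$ large, then $n$ large). The paper instead splits at $\lfloor n/2\rfloor$: if the earliest $m$ is at most $n/2$, then $\mathfrak{p}$ divides some $f^k(\alpha)$ with $k\le n/2$; otherwise $n-m\le n/2$ and $\mathfrak{p}$ divides some $f^k(0)$ with $k\le n/2$. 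Either way $\sum_{\mathfrak{p}\in Z}N_\mathfrak{p}\le\sum_{k\le n/2}\bigl(h(f^k(\alpha))+h(f^k(0))\bigr)=O(d^{n/2})$, which is $o(h(f^n(\alpha)))$ in one step. This avoids the iterated limit entirely and makes the ``obstacle'' you flag about composing constants disappear. Your explicit treatment of the bad-prime set $S$ is more careful than the paper's, which silently assumes good reduction in the congruence step.
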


\begin{proof}
	 We have $h(f^n(\alpha))\le d^n(\hat{h}_f(\alpha)+O(1))$ and $h(f^n(0))\le d^n(\hat{h}_f(0)+O(1))$ where $\hat{h}_f$ is the canonical height, see \cite[Theorem 3.20]{ads}. If $\mathfrak{p}$ divides $f^n(\alpha)$ and $f^k(\alpha)$ for some $k<n$, then $\mathfrak{p}$ divides $f^{n-k}(0)$. Therefore, any prime divisor of $f^n(\alpha)$ that divides $f^k(\alpha)$ for some $k<n$ divides either $f^k(\alpha)$ or $f^k(0)$ for some $1\le k\le\lfloor n/2\rfloor$. This yields \begin{equation*}\begin{split}\sum_{\mathfrak{p}\in Z} N_\mathfrak{p} & \le \sum_{i=1}^{\lfloor n/2\rfloor} h(f^i(\alpha))+h(f^i(0)) \\ & \le \sum_{i=1}^{\lfloor n/2\rfloor} d^i(\hat{h}_f(0)+\hat{h}_f(\alpha)+O(1))\\ & \le d^{\lfloor n/2\rfloor +1}(\hat{h}_f(0)+\hat{h}_f(\alpha)+O(1))\\ & \le \delta h(f^n(\alpha))\end{split}\end{equation*} for all sufficiently large $n$.
\end{proof}


\begin{prop}\label{prop: primitiveprime}
Let $f(x)\in K[x]$ where $K$ is a number field. Suppose $f$ has degree $d$ and $f^n(x)$ has at most $r$ irreducible factors over $K$ as $n\to\infty$, so that \[f^{N+n}(x)=f_{N,1}(f^n(x))f_{N,2}(f^n(x))\cdots f_{N,r}(f^n(x))\] is the prime factorization of $f^{N+n}(x)$ in $K[x]$ for any sufficiently large $N$. Let $\gamma\in K$ have infinite forward orbit under $f$. Let $Z$ denote the set of primes of $\mathcal{O}_K$ such that $\min(v_\mathfrak{p}(f^m(\alpha)), v_\mathfrak{p}(f^{n+N}(\alpha)))>0$ for some $m<n+N$ such that $f^m(\alpha)\ne 0$. Fix $j$, then for all sufficiently large $n$ there is a prime $\mathfrak{p}\notin Z$ such that $v_{\mathfrak{p}}(f_{N,j}(f^n(\gamma)))=1$. 
\end{prop}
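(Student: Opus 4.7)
The plan is to use Proposition \ref{prop:abc-GNT} to force $f_{N,j}(f^n(\gamma))$ to have many distinct prime divisors, use a product-formula argument to show that most appear with valuation exactly $1$, and use Proposition \ref{prop:smallimprimitive} to discard the primes lying in $Z$; what remains will be a primitive prime of valuation exactly one.

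Set $D := \deg f_{N,j}$ and fix $L \geq 1$ large enough that $Dd^L \geq 3$. The polynomial $F(x) := f_{N,j}(f^L(x))$ is a factor of the separable polynomial $f^{N+L}(x)$, hence itself separable, with $\deg F = Dd^L$. For $n \geq L$, applying Proposition \ref{prop:abc-GNT} to $F$ at $f^{n-L}(\gamma)$ and using $h(f^k(\gamma)) = d^k \hat{h}_f(\gamma) + O(1)$ (with $\hat{h}_f(\gamma) > 0$ since $\gamma$ has infinite orbit) yields, for any $\epsilon > 0$,
$$A := \sum_{v_\mathfrak{p}(f_{N,j}(f^n(\gamma))) > 0} N_\mathfrak{p} \;\geq\; (Dd^L - 2 - \epsilon)\, d^{n-L}\, \hat{h}_f(\gamma) + O(1).$$
Split $A = A_1 + A_{\geq 2}$ according to whether $v_\mathfrak{p}(f_{N,j}(f^n(\gamma)))$ equals $1$ or is at least $2$. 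The product-formula inequality $\sum_{v_\mathfrak{p}(y) > 0} v_\mathfrak{p}(y)\, N_\mathfrak{p} \leq h(y)$ applied to $y = f_{N,j}(f^n(\gamma))$, combined with the functorial bound $h(f_{N,j}(z)) = D\, h(z) + O(1)$, gives $A_1 + 2 A_{\geq 2} \leq Dd^n \hat{h}_f(\gamma) + O(1)$; subtracting the $abc$ lower bound on $A$ yields $A_{\geq 2} \leq (2+\epsilon)\, d^{n-L} \hat{h}_f(\gamma) + O(1)$ and hence $A_1 \geq Dd^n \hat{h}_f(\gamma) - 2(2+\epsilon)\, d^{n-L} \hat{h}_f(\gamma) + O(1)$. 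Meanwhile, Proposition \ref{prop:smallimprimitive} applied with the index shift $n \mapsto n + N$ and an arbitrary parameter $\delta > 0$ gives $\sum_{\mathfrak{p} \in Z} N_\mathfrak{p} \leq \delta\, d^{N+n}\, \hat{h}_f(\gamma) + O(1)$ for all sufficiently large $n$.

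Now choose $L$ large enough that $2(2+\epsilon) d^{-L} < D/4$ and then take $\delta < D/(4 d^N)$; the total weight of primes $\mathfrak{p} \notin Z$ with $v_\mathfrak{p}(f_{N,j}(f^n(\gamma))) = 1$ is at least $A_1 - \sum_{\mathfrak{p} \in Z} N_\mathfrak{p} \geq (D/2)\, d^n \hat{h}_f(\gamma) + O(1)$, which is strictly positive for $n$ large, so the required prime exists. The main technical point is that Proposition \ref{prop:abc-GNT} requires degree at least $3$, while $\deg f_{N,j}$ could be $1$ or $2$; precomposing with $f^L$ inflates the degree past the threshold at the cost of a factor $d^{-L}$ loss in the subleading terms, which is absorbed by choosing $L$ sufficiently large.
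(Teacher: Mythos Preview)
Your argument is correct and follows essentially the same route as the paper's proof: precompose $f_{N,j}$ with an iterate $f^L$ (the paper uses $f^i$ with $D\ge 8$ and $\epsilon=1$) to reach the degree threshold of Proposition~\ref{prop:abc-GNT}, use the height upper bound to control the primes of valuation $\ge 2$, and invoke Proposition~\ref{prop:smallimprimitive} to discard the imprimitive primes. The only cosmetic difference is that you track constants via $d^n\hat h_f(\gamma)$ with general $\epsilon,L,\delta$, whereas the paper fixes $\epsilon=1$, $D\ge 8$, $\delta=1/(2d^{N+i})$ and compares against $h(f^{n-i}(\gamma))$ directly.
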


The proof of this proposition is similar to that of \cite[Theorem 1.2]{GraNguTuc2013}.

\begin{proof} 
Choose an $i$ so that $f_{N,j}(f^i(x))$ has degree $D\geq 8$. By Proposition \ref{prop:abc-GNT} with $\epsilon =1$, 
\begin{equation}\label{eqn:boundonsumNp}
\sum_{v_{\mathfrak{p}}(f_{N,j}(f^{n}(\gamma)))>0}N_{\mathfrak{p}}\geq(D-3)h(f^{n-i}(\gamma))+C_1.
\end{equation}

On the other hand, since $h(\psi(z))\leq dh(z)+O_{\psi}(1)$ for any rational function $\psi$ of degree $d$ (see~\cite[Theorem 3.11]{ads}), we also have 
\begin{equation*}
\sum_{v_{\mathfrak{p}}(f_{N,j}(f^{n}(\gamma)))>0} v_{\mathfrak{p}}(f_{N,j}(f^{n}(\gamma))) N_{\mathfrak{p}}\leq D h(f^{n-i}(\gamma))+O(1).
\end{equation*}
From this, we can see that 
\begin{equation}\label{eqn:boundonsumNp2}
\sum_{v_{\mathfrak{p}}(f_{N,j}(f^{n}(\gamma)))\geq 2}  N_{\mathfrak{p}}\leq \frac{D}{2} h(f^{n-i}(\gamma))+O(1).
\end{equation}

Combining Equations (\ref{eqn:boundonsumNp}) and (\ref{eqn:boundonsumNp2}) gives 
\[\sum_{v_{\mathfrak{p}}(f_{N,j}(f^{n}(\gamma)))=1}N_{\mathfrak{p}}>\left(\frac{D}{2}-3\right)h(f^{n-i}(\gamma))+C_2>h(f^{n-i}(\gamma))+C_2.\]

Now by Proposition \ref{prop:smallimprimitive} with $\delta = \frac{1}{2d^{N+i}}$ we see that for all sufficiently large $n$, 
\[\sum_{\mathfrak{p}\in Z} N_{\mathfrak{p}}< \frac{1}{2d^{N+i}}h(f^{N+n}(\gamma)) <\frac{1}{2} h(f^{n-i}(\gamma))+C_3.\]
Hence, if $h(f^{n-i}(\gamma))>2(C_3-C_2)$, 
\[\sum_{\mathfrak{p}\in Z} N_{\mathfrak{p}}<\sum_{v_{\mathfrak{p}}(f_{N,j}(f^{n}(\gamma)))=1}N_{\mathfrak{p}},\]
proving the result.
\end{proof}

We are now ready to prove Theorem~\ref{thm:quadraticpoly}.

\begin{proof}[Proof of Theorem~$\ref{thm:quadraticpoly}$]
We can see that each of the conditions are sufficient for $[\Aut(T_\infty):G(f)]=\infty$ by the comments after Conjecture 3.11 of \cite{jonessurvey}. 

Now suppose that $f$ is not PCF and $f$ is eventually stable. We can conjugate $f$ by scaling to give a monic polynomial. Then we can see by   Proposition~\ref{prop: primitiveprime} that the conditions of Theorem~\ref{thm:PPDsufficiency} are met. Hence, $[\Aut(T_\infty):G(f)]<\infty$.
\end{proof}

A key ingredient in our proof of the degree 3 polynomial case is the following result from \cite{huang}.
	
For $a,b\in K^*$, define the generalized greatest common divisor of $a$ and $b$ as \[h_\textup{gcd}(a,b)=\frac{1}{[K:\mathbb{Q}]}\sum_{v\in M_K^0} n_v\min(v^+(a),v^+(b))\] where $v^+(a)=\max\{-\log|a|_v,0\}$, $M_K^0$ denotes the non-archimedean places of $K$, and $n_v=[K_v:\Q_v]$.
	
	\begin{thm}[\cite{huang}]{\label{thm:Huang}}
		Assume Vojta's Conjecture. Let $K$ be a number field and $f\in K[x]$ be a polynomial of degree $d\ge 2$. Assume that $f$ is not conjugate \textup(by a rational automorphism defined over $\overline{\Q}$\textup) to a power map or a Chebyshev map. Suppose $a,b\in K$ are not exceptional for $f$. Assume that there is no polynomial $H\in\overline{\QQ}[x]$ such that: $H\circ f^k=f^k\circ H$ for some $k\ge 1$, $H(0)=0$, and $H(f^l(a))=f^m(b)$ or $H(f^l(b))=f^m(a)$ for some $l,m\ge 1$. Then for any $\epsilon>0$, there exists a $C=C(\epsilon,a,b,f)>0$ such that for all $n\ge 1$, we have \[h_\textup{gcd}(f^n(a),f^n(b))\le\epsilon d^n+C.\]  
	\end{thm}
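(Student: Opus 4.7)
The strategy is to reinterpret the generalized greatest common divisor $h_{\gcd}(f^n(a), f^n(b))$ as an arithmetic height on a blowup of $\mathbb{P}^1 \times \mathbb{P}^1$ and then apply Vojta's Conjecture to that surface. Let $\pi : X \to \mathbb{P}^1 \times \mathbb{P}^1$ be the blowup at the point $(0,0)$ with exceptional divisor $E$, and let $P_n \in X(K)$ denote the canonical lift of $(f^n(a), f^n(b))$. The key local observation is that at each non-archimedean place $v$ of $K$, the local height function $\lambda_{E,v}$ computes $\min(v^+(f^n(a)), v^+(f^n(b)))$ up to a uniformly bounded error, so that
\[
h_E(P_n) \;=\; h_{\gcd}(f^n(a), f^n(b)) + O(1).
\]
Dually, with $A := \pi^*(H_1 + H_2)$ the pullback of the sum of the two rulings, one checks $h_A(P_n) = h(f^n(a)) + h(f^n(b)) + O(1)$, and by the standard comparison between naive and canonical heights under iteration (cf.\ \cite[Theorem 3.20]{ads}) this is $O(d^n)$.

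Next I would invoke Vojta's Conjecture on the smooth projective surface $X$ with boundary divisor $E$ and ample divisor $A$. In the version convenient here, for every $\epsilon > 0$ there is a proper Zariski-closed subset $Z_\epsilon \subsetneq X$ such that
\[
h_E(P) \;\le\; \epsilon\, h_A(P) + O_\epsilon(1) \qquad \text{for all } P \in (X \setminus Z_\epsilon)(K).
\]
Combining this with the height computations above, as soon as $P_n \notin Z_\epsilon$ one gets $h_{\gcd}(f^n(a), f^n(b)) \le \epsilon d^n + C$, which is exactly the desired conclusion. Since $Z_\epsilon$ is a fixed proper closed subscheme independent of $n$, the whole theorem reduces to showing that only finitely many $P_n$ can fall into any such $Z$.

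The main obstacle is precisely this last step: ruling out that infinitely many $P_n$ are trapped in a proper subvariety. Decomposing $Z_\epsilon$ into irreducible components, it suffices to show that no irreducible curve $C \subset X$ contains infinitely many $P_n$. If one did, its image $\pi(C) \subset \mathbb{P}^1 \times \mathbb{P}^1$ would be an irreducible curve containing infinitely many points of the orbit of $(a,b)$ under $f \times f$, and a Northcott-style argument forces $\pi(C)$ to be preperiodic under $f \times f$. By the Medvedev--Scanlon classification of irreducible preperiodic curves in $(\mathbb{P}^1)^2$ under a split diagonal self-map $(f,f)$, such a curve is either a fibre of one of the two projections --- excluded because $a$ and $b$ are non-exceptional for $f$ --- or the graph of a polynomial $H \in \overline{\mathbb{Q}}[x]$ satisfying $H \circ f^k = f^k \circ H$ for some $k \ge 1$; the exclusion of power and Chebyshev maps is precisely what rules out the exotic invariant curves in their classification. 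Because $C$ was taken inside the blowup $X$ and still meets the strict transform structure above $(0,0)$, one can arrange the normalization $H(0) = 0$, and the presence of infinitely many $(f^n(a), f^n(b))$ on $\pi(C)$ then yields a relation $H(f^l(a)) = f^m(b)$ or $H(f^l(b)) = f^m(a)$ for some $l, m \ge 1$, contradicting the hypothesis. The hardest part, and the place where the most care is required, is this dynamical classification together with the bookkeeping needed to extract the normalization $H(0) = 0$ from the geometry of the blowup.
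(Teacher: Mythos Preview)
The paper does not give its own proof of this statement: it is quoted directly from \cite{huang}, with only a brief remark explaining why the extra normalization $H(0)=0$ may be imposed when specializing Huang's result to the base points $\alpha=\beta=0$. Your outline is in fact a faithful sketch of Huang's argument --- interpret $h_{\gcd}$ as the height attached to the exceptional divisor on the blowup of $\mathbb{P}^1\times\mathbb{P}^1$ at $(0,0)$, apply Vojta's Conjecture on that surface, and use the Medvedev--Scanlon classification of $(f\times f)$-periodic curves to handle the exceptional set --- so there is nothing to compare against in the present paper; you have simply supplied what the authors chose to cite rather than reprove.
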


\begin{remark}
	The condition $H(0)=0$ does not appear in \cite{huang}; it is readily seen that the conclusion of Theorem \ref{thm:Huang} holds if such an $H$ satisfies $H(0)\ne 0$, as we have taken $\alpha=\beta=0$ in the original statement of \cite{huang}.
\end{remark}

The form of Vojta's Conjecture used in this result is as follows.

\begin{conj}[Vojta's Conjecture \cite{Vojta}]{\label{conj:Vojta}}
	Let $V$ be a smooth projective variety over a number field $K$. Let $\mathcal{K}$ be the canonical divisor of $V$ and let $A$ be an ample normal crossings divisor, with associated height functions $h_\mathcal{K}$ and $h_A$. For any $\epsilon>0$, there is a proper Zariski-closed $X_\epsilon\subset V$ and a constant $C_\epsilon=C(V,K,A)$ such that \[h_\mathcal{K}(x)\le h_A(x)+C_\epsilon\] for all $x\in V(K)\backslash X_\epsilon(K)$.
\end{conj}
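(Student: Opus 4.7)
The statement to be addressed is Vojta's Conjecture itself, a central open problem in Diophantine geometry; the plan is therefore aspirational and follows the Bombieri-Vojta method that has succeeded in the special cases of curves (Faltings' proof of the Mordell conjecture) and of closed subvarieties of abelian varieties (Faltings' big theorem). The underlying principle is that the inequality $h_\mathcal{K}(x)\le h_A(x)+C_\epsilon$ off a proper Zariski-closed set detects arithmetic positivity of the canonical divisor: rational points cannot accumulate faster with respect to $\mathcal{K}$ than with respect to a fixed ample reference. I would argue by contradiction, starting from a Zariski-dense sequence $\{x_i\}\subset V(K)$ with $h_\mathcal{K}(x_i)>(1+\epsilon)h_A(x_i)$ and $h_A(x_i)\to\infty$, then passing to a subsequence along which the ratios $h_A(x_{i+1})/h_A(x_i)$ grow rapidly.

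On the product $V\times V$ I would construct, via Siegel's lemma applied to a lattice of global sections, an auxiliary section $s$ of a large multiple of $p_1^{*}A\otimes p_2^{*}A$ vanishing to prescribed high order along a diagonal-type cycle, with the archimedean and non-archimedean heights of $s$ controlled in terms of the chosen bidegrees. The next step is an index estimate: for a well-chosen pair $(x_i,x_j)$, the assumed violation of the conjecture together with a Liouville-type lower bound would force the arithmetic index of $s$ at $(x_i,x_j)$ to be abnormally large, while the Siegel-lemma construction imposes a competing upper bound on the same index, producing the desired contradiction. Choosing the balance of bidegrees to match the height ratio of the putative $x_i$, $x_j$ is the standard Vojta calibration.

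The main obstacle, and precisely the reason Vojta's conjecture remains open, is the required product theorem: one must show that the locus in $V\times V$ where $s$ has anomalous index is contained in a product of proper subvarieties, thereby confining the counterexample points to a Zariski-closed subset on one factor. Such a product theorem is known on semi-abelian varieties through the work of Faltings, Vojta, and R\'emond, and for curves it drives the Mordell proof, but for an arbitrary smooth projective $V$ with normal-crossings boundary $A$ no such statement is known. A natural reduction by mapping $V$ into its Albanese variety works only when the Albanese map has maximal rank, excluding large classes of varieties of general type; the normal-crossings hypothesis also imposes truncation issues on counting functions that interact delicately with the index estimate. For this reason I would not attempt a proof from scratch in the context of the present paper — the realistic usable role of Conjecture~\ref{conj:Vojta} is as a black box feeding into Huang's Theorem~\ref{thm:Huang}, which the authors invoke to verify the hypotheses of Theorem~\ref{thm:PPDsufficiency} in the cubic-polynomial case of Theorem~\ref{thm:cubicpoly}.
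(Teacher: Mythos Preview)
The statement you were asked to address is Conjecture~\ref{conj:Vojta}, and the paper offers no proof of it whatsoever: it is stated as an open conjecture and used only as a standing hypothesis, feeding into Huang's Theorem~\ref{thm:Huang} and thence into the proof of Theorem~\ref{thm:cubicpoly}. There is therefore no ``paper's own proof'' to compare your proposal against.

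Your write-up is not a proof but a correct and well-informed sketch of the Bombieri--Vojta--Faltings strategy, together with an honest account of the obstruction (the missing product theorem for general $V$). You yourself arrive at the right conclusion in your final paragraph: the conjecture is to be treated as a black box here. That is exactly how the paper treats it, so your assessment is accurate; just be aware that none of the preceding outline constitutes a proof, and the paper does not pretend otherwise.
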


\begin{prop}{\label{prop:grandorbitcollision}}
	Let $K$ be a number field, and assume the $abc$-Conjecture for $K$. Let $f\in K[x]$ be a non-PCF, eventually stable polynomial of degree $3$ with distinct finite critical points $\gamma_1,\gamma_2$. If $f^i(\gamma_1)=f^j(\gamma_2)$ for some $i\ne j$, then $[\textup{Aut}(T_\infty):G(f)]~<~\infty$. 
\end{prop}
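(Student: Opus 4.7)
The plan is to verify the hypotheses of Theorem \ref{thm:PPDsufficiency} with $d=3$. After conjugating by a scaling (which fixes $\alpha = 0$ and so preserves $G(f)$, possibly after a harmless finite extension of $K$), we may assume $f$ is monic. Since $\deg f = 3$ and the two critical points $\gamma_1,\gamma_2$ are distinct, each is a simple root of $f'$ and hence of multiplicity one in the sense required by Theorem \ref{thm:PPDsufficiency}. Both critical points have infinite forward orbit: if either were preperiodic, the relation $f^i(\gamma_1) = f^j(\gamma_2)$ would force the other to be preperiodic as well, contradicting the non-PCF hypothesis.

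Relabel if necessary so that $i < j$. Iterating the collision yields
\[
f^{m}(\gamma_2) \;=\; f^{m - (j - i)}(\gamma_1) \qquad \text{for every } m \geq j.
\]
This is the crucial reduction: because the shift $j - i$ is strictly positive, whenever $j \leq m \leq n + N$ the corresponding $\gamma_1$-index $m - (j-i)$ lies \emph{strictly below} $n + N$. Consequently, any prime that avoids the low iterates of $\gamma_1$'s orbit automatically avoids $f^m(\gamma_2)$ for $j \leq m \leq n + N$ as well.

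Fix $N$ so that the factorization $f^{N+n}(x) = \prod_{k=1}^r f_{N,k}(f^n(x))$ stabilizes, as guaranteed by eventual stability. For each sufficiently large $n$ and each factor index $k$, invoke Proposition \ref{prop: primitiveprime} with $\gamma = \gamma_1$ to obtain a prime $\mathfrak{p}_n \notin Z_{\gamma_1}$ with $v_{\mathfrak{p}_n}(f_{N,k}(f^n(\gamma_1))) = 1$. Since $\mathfrak{p}_n$ then divides $f^{n+N}(\gamma_1)$, the $Z_{\gamma_1}$-avoidance forces $v_{\mathfrak{p}_n}(f^m(\gamma_1)) = 0$ for all $m < n + N$. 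Inspection of the proof of Proposition \ref{prop: primitiveprime} shows that the collective weight of such primes grows without bound in $n$, so we may further require $\mathfrak{p}_n$ to avoid any prescribed finite set of primes; we take this set to consist of the primes above $3$ together with the primes dividing $f^m(\gamma_2)$ for $0 \leq m < j$.

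The four bullets of Theorem \ref{thm:PPDsufficiency} now hold: the first two are delivered by Proposition \ref{prop: primitiveprime}; the third (applied to $\gamma_t = \gamma_2$) splits into the ranges $0 \leq m < j$ (handled by the exclusion set) and $j \leq m \leq n + N$ (handled by the displayed identity combined with the $\gamma_1$-bullet); and the fourth is immediate from excluding the primes above $3$. Theorem \ref{thm:PPDsufficiency} then yields $[\textup{Aut}(T_\infty) : G(f)] < \infty$. The main obstacle is the third bullet, since $\gamma_2$'s orbit is not directly governed by Proposition \ref{prop: primitiveprime}; the grand orbit collision with a strictly positive shift is exactly what converts the problem into one solely about $\gamma_1$'s orbit, and choosing the labelling $i < j$ so that the shift goes in the favourable direction is the key combinatorial observation. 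All the analytic work is carried out by the $abc$-based Proposition \ref{prop: primitiveprime}.
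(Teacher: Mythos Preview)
Your argument is correct and follows the same route as the paper's proof: reduce to Theorem~\ref{thm:PPDsufficiency} via Proposition~\ref{prop: primitiveprime}, using the collision $f^i(\gamma_1)=f^j(\gamma_2)$ with $i<j$ to rewrite $f^m(\gamma_2)$ for $m\ge j$ as an \emph{earlier} iterate $f^{m-(j-i)}(\gamma_1)$, so that any primitive prime for the $\gamma_1$-orbit automatically misses these values. The paper's proof is a two-sentence sketch that says exactly this; you have simply made explicit the two details it leaves implicit, namely that the finitely many initial values $f^m(\gamma_2)$ for $m<j$ and the primes above $3$ can be excluded because the proof of Proposition~\ref{prop: primitiveprime} actually produces a growing set of suitable primes.
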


\begin{proof} As $f$ is non-PCF, the hypothesis implies that $\gamma_1$ and $\gamma_2$ have infinite forward orbit under $f$. If $f^i(\gamma_1)=f^j(\gamma_2)$ for $i<j$, then for all $n>2j-i$, any prime divisor of $f^n(\gamma_2)$ by definition is not a primitive prime divisor of $f^n(\gamma_1)$. The result then follows immediately from Theorem \ref{thm:PPDsufficiency} combined with Proposition \ref{prop: primitiveprime}. \end{proof}


\begin{prop}{\label{prop:nongeneric}}
	Let $K$ be a number field, and let $f\in K[x]$ be a non-PCF polynomial of degree $3$ with distinct finite critical points $\gamma_1,\gamma_2\in K$. Suppose there exists an $H\in\overline{\QQ}[x]$ such that $H\circ f^t=f^t\circ H$ for some $t\ge 1$, and that $H(0)=0$, and $H(f^m(\gamma_1))=f^n(\gamma_2)$ for some $m,n\ge 1$. Then $H=L\circ \psi^r$, $r\ge 0$, where $L$ is a linear polynomial commuting with some iterate of $f$, and $\psi^k=f^l$ for some $k,l\ge 0$. One of the following must hold: \begin{enumerate} 
		\item $L(x)=ax$ with $a\ne 1$, in which case $f$ is not eventually stable
		
		\item $L(x)=x$, in which case $f^i(\gamma_1)=f^j(\gamma_2)$ for some $i,j\ge 1$
		
		\item $L(0)\ne 0$, in which case, for any $\epsilon>0$, $h_\textup{gcd}(f^n(\gamma_1),f^n(\gamma_2))\le\epsilon d^n$ for all sufficiently large $n$.
		\end{enumerate}
\end{prop}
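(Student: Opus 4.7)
The plan is to analyze the commutation relation $H \circ f^t = f^t \circ H$ via Ritt's theorem on commuting polynomials, then treat the three cases of the resulting linear factor $L$ separately.

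In the first stage, since $H$ and $f^t$ commute and $f$ is a cubic with two distinct finite critical points (so $f$ is not a power map), a version of Ritt's theorem isolating linear symmetries produces a polynomial $\psi$ and a linear polynomial $L$ commuting with some iterate $f^s$ of $f$ such that $H = L \circ \psi^r$ with $\psi^k = f^l$ for some $k, l \geq 0$ and $r \geq 0$. The hypothesis $H(0) = 0$ translates into $\psi^r(0) = L^{-1}(0)$. Writing $L(x) = ax + b$, the three cases in the statement correspond precisely to $b \neq 0$ (case iii), $b = 0$ with $a \neq 1$ (case i), or $L = \mathrm{id}$ (case ii).

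In case (ii), applying Ritt once more to $\psi$ and $f$ (both commute with $f^t$), we write $\psi = \omega^a$ and $f = \omega^b$ (after linear conjugation) for a common polynomial $\omega$; the relation $\psi^r(f^m(\gamma_1)) = f^n(\gamma_2)$ then reads $\omega^{ar + bm}(\gamma_1) = \omega^{bn}(\gamma_2)$. A degree count using $(\deg\psi)^k = 3^l$ gives $k \mid l$, hence $b \mid ar$, so both exponents are multiples of $b$, and we recover $f^i(\gamma_1) = f^j(\gamma_2)$ for appropriate $i, j \geq 1$. In case (i), the identity $L \circ f^s = f^s \circ L$ with $L(x) = ax$, $a \neq 1$, forces $a$ to be a root of unity of some order $n \geq 2$ (else $f^s$ would be linear) and $f^s(x) = x \, g(x^n)$; in particular $f^s(0) = 0$, and the resulting $n$-fold symmetry propagates to every iterate $f^{sN}$, forcing the number of $K$-irreducible factors of $f^{sN}(x)$ to grow without bound in $N$, so $f$ is not eventually stable.

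Finally, in case (iii), iterating the hypothesis via $H \circ f^t = f^t \circ H$ yields $f^N(\gamma_2) = H\bigl(f^{N - (n - m)}(\gamma_1)\bigr)$ for every large $N$ in a fixed residue class modulo $t$. Any prime $\mathfrak{p}$ dividing both $f^N(\gamma_1)$ and $f^N(\gamma_2)$ then forces $\psi^r\bigl(f^{N - (n - m)}(\gamma_1)\bigr) \equiv L^{-1}(0) \pmod{\mathfrak{p}}$ — a \emph{nonzero} residue, since $L(0) \neq 0$. This pins $f^{N - (n - m)}(\gamma_1) \pmod{\mathfrak{p}}$ to one of the finitely many roots of $\psi^r(x) - L^{-1}(0)$, and combined with Proposition~\ref{prop:smallimprimitive} and the height bound $h(f^N(\gamma_j)) = O(d^N)$ this gives $h_{\textup{gcd}}(f^N(\gamma_1), f^N(\gamma_2)) \leq \epsilon d^N$ for all sufficiently large $N$. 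The main obstacle will be extracting the precise decomposition $H = L \circ \psi^r$ in Stage 1 (requiring a careful form of Ritt's theorem covering the Chebyshev and power-map edge cases), and in case (i) the descent of the symmetry $L$ from $\overline{\QQ}$ to $K$ needed to convert the analytic symmetry into growth of the number of $K$-irreducible factors of $f^{sN}$.
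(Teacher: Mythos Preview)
Your Ritt decomposition in Stage~1 and your treatments of cases~(i) and~(ii) track the paper's approach; the paper simply declares (i) and (ii) ``clear'' and your added detail is fine.

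The substantive divergence is case~(iii), and there your sketch has a genuine gap. From $\mathfrak{p}\mid f^N(\gamma_2)=H\bigl(f^{N-(n-m)}(\gamma_1)\bigr)$ you correctly get $\psi^r\bigl(f^{N-(n-m)}(\gamma_1)\bigr)\equiv L^{-1}(0)\pmod{\mathfrak{p}}$, but this uses only the divisibility of $f^N(\gamma_2)$; you have not yet brought $\mathfrak{p}\mid f^N(\gamma_1)$ into play, and knowing that $f^{N-(n-m)}(\gamma_1)$ reduces to one of finitely many algebraic numbers modulo $\mathfrak{p}$ does not by itself bound $\sum_{\mathfrak p}\min\bigl(v_{\mathfrak p}(f^N(\gamma_1)),v_{\mathfrak p}(f^N(\gamma_2))\bigr)$. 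Proposition~\ref{prop:smallimprimitive} concerns primes dividing two elements of a \emph{single} orbit, so it does not obviously apply here. One could try to salvage the argument via the resultant of $H(x)$ and $f^{n-m}(x)$, but then you must verify these polynomials are coprime, and in any case the iterated relation $H\circ f^t=f^t\circ H$ only yields $H(f^{m+kt}(\gamma_1))=f^{n+kt}(\gamma_2)$ on the arithmetic progression $N\in n+t\ZZ_{\ge0}$, whereas the conclusion is asserted for \emph{all} sufficiently large $n$.

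The paper's route in case~(iii) avoids all of this. It conjugates $f$ to a monic centered cubic $g=\mu^{-1}f\mu$; then any linear $L'$ commuting with an iterate of $g$ is forced to be $\pm x$. If $L'(x)=x$ we are in case~(ii); if $L'(x)=-x$, then $g$ is odd and the critical points of a monic centered cubic automatically satisfy $\gamma_2'=-\gamma_1'$, so $g^n(\gamma_1')=-g^n(\gamma_2')$ for \emph{every} $n$. Hence any prime dividing both $f^n(\gamma_1)=c\bigl(g^n(\gamma_1')-\mu^{-1}(0)\bigr)$ and $f^n(\gamma_2)=c\bigl(g^n(\gamma_2')-\mu^{-1}(0)\bigr)$ divides their sum, namely the fixed nonzero constant $-2c\,\mu^{-1}(0)$ (here $L(0)\ne0$ is exactly the condition $\mu^{-1}(0)\ne0$). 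This gives $h_{\textup{gcd}}(f^n(\gamma_1),f^n(\gamma_2))=O(1)$ directly, with no appeal to Proposition~\ref{prop:smallimprimitive} and no restriction to a residue class. I recommend replacing your case~(iii) argument with this conjugation trick.
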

\begin{proof}
That there exist such an $L$ and $\psi$ follows from Ritt's Theorem \cite{Ritt2, Ritt1}. Cases (i) and (ii) are clear, so suppose we are in Case (iii). Conjugating $f$ to a monic centered representative $g$, so that $g=\mu^{-1}f\mu$, we obtain $H'=L'\circ \psi'^r$, where $H'=\mu^{-1}H\mu$, $L'=\mu^{-1}L\mu$, and $\psi'=\mu^{-1}\psi\mu$. As $g$ is in monic centered form, we must have $L'(x)=-x$ or $L'(x)=x$. If $L'(x)=x$, then we are in Case (ii); assume therefore that $L'(x)=-x$. If $\gamma_1'$ and $\gamma_2'$ are the critical points of $g$, then this implies $g^n(\gamma_1')=-g^n(-\gamma_1')=-g^n(\gamma_2')$. Noting that $\mu^{-1}(0)\ne 0$ (for otherwise, Case (i) or Case (ii) would hold), it follows that \[h_\textup{gcd}(g^n(\gamma_1')-\mu^{-1}(0),g^n(\gamma_2')-\mu^{-1}(0))=h_\textup{gcd}(f^n(\gamma_1),f^n(\gamma_2))\le\epsilon d^n\] for all sufficiently large $n$. 
\end{proof}
	
		
		

\begin{proof}[Proof of Theorem $\ref{thm:cubicpoly}$]
	The sufficiency of conditions (i) and (ii) follows from the proof of Conjecture 3.11 of \cite{jonessurvey}. To see that (iii) implies the index is infinite, note that \[\textup{Disc}(f^n)=\pm 3^{3^n}\textup{Disc}(f^{n-1})^3f^n(\gamma_1)f^n(\gamma_2).\] Base changing so that $K$ contains $i$ and $\sqrt{3}$, we get that $\textup{Disc}(f^n)$ is a square in $K_{n-1}$, so $\Gal(K_n/K_{n-1})$ contains only even permutations in its action on the roots of $f^n$. Since this holds for all $n\ge r$, $[\textup{Aut}(T_\infty):G(f)]=\infty$. 
	
	Conversely, assume $f$ is not PCF, that $f$ is eventually stable, and that $f$ has two distinct finite critical points $\gamma_1,\gamma_2\in K$ not satisfying (iii) for any $r$. If there is no polynomial $H$ as described in Theorem \ref{thm:Huang}, then Theorems \ref{thm:Huang} and \ref{thm:PPDsufficiency} combined with Proposition \ref{prop: primitiveprime} imply $[\textup{Aut}(T_{\infty}):G(f)]<\infty$ (note that we can conjugate by scaling to assume $f$ is monic, as in the statement of Theorem \ref{thm:PPDsufficiency}). On the other hand, if there does exist such an $H$, then by Proposition \ref{prop:nongeneric}, either $f^i(\gamma_1)=f^j(\gamma_2)$ for some $i,j\ge1$ with $i\ne j$, or, for any $\epsilon>0$, we have 
$$h_{gcd}(f^n(\gamma_1),f^n(\gamma_2))\le\epsilon d^n$$ 
for all sufficiently large $n$. In the former case, Proposition \ref{prop:grandorbitcollision} then yields $[\textup{Aut}(T_{\infty}):G(f)]<\infty$. In the latter case, Theorems \ref{thm:Huang} and \ref{thm:PPDsufficiency}, along with Proposition \ref{prop: primitiveprime} complete the proof. \end{proof}

\section{Example of families of rational maps with finite index}

\subsection{Cubic polynomial examples}

If we focus on specific examples of cubic and quadratic families of maps, we need not appeal to the $abc$-Conjecture or Vojta's Conjecture to get finite index results. The following proposition gives sufficient conditions we could check for a specific example or family of cubic polynomials to prove finite index arboreal representation.  

\begin{prop}\label{prop: cubicconditions}
Let $K$ be a number field and let $f(x)\in K[x]$ be a monic degree $3$ polynomial. Let $\gamma_1, \gamma_2$ denote the finite critical points of $f$. Suppose that $f^{n}(x)$ is irreducible and there exists a prime ${\mathfrak{p}}$ of $K$ such that 
\begin{itemize}
\item $v_{\mathfrak{p}}(f^n(\gamma_1))$ is odd 
\item $v_{\mathfrak{p}}(f^n(\gamma_2))=0$ 
\item $v_{\mathfrak{p}}(3)=0$
\item $v_{\mathfrak{p}}(f^i(\gamma_1))=v_{\mathfrak{p}}(f^i(\gamma_2))=0$ for $1\leq i<n$
\end{itemize}
then $[K_n:K_{n-1}]=6^{3^{n-1}}$.
\end{prop}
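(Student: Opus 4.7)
The plan is to adapt the proof of Theorem~\ref{thm:PPDsufficiency} to the special case $d=3$ and $r=1$ (one irreducible factor, since $f^n$ itself is irreducible), refining the argument to extract the explicit bottom-layer degree $[K_n:K_{n-1}] = 6^{3^{n-1}}$ from a single witnessing prime $\mathfrak{p}$. The strategy is to show that each of the $3^{n-1}$ cubic sub-extensions corresponding to preimages under $f$ of the roots $\alpha_1,\dots,\alpha_{3^{n-1}}$ of $f^{n-1}$ contributes an independent $S_3$ factor to $\textup{Gal}(K_n/K_{n-1})$.

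First I would apply Capelli's Lemma to $f^n = f^{n-1}\circ f$ to deduce that $f^{n-1}$ is irreducible and that each $f(x) - \alpha_i$ is irreducible over $K(\alpha_i)$, with polynomial discriminant $-27(\alpha_i - f(\gamma_1))(\alpha_i - f(\gamma_2))$ from the formula preceding the proof of Theorem~\ref{thm:PPDsufficiency}. Formula~(\ref{eqn:disciteratesformula}) applied to $f^{n-1}$, combined with the hypotheses $v_\mathfrak{p}(3) = 0$ and $v_\mathfrak{p}(f^i(\gamma_j)) = 0$ for $1\le i < n$ and $j\in\{1,2\}$, then yields $v_\mathfrak{p}(\textup{Disc}(f^{n-1})) = 0$. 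Hence $\mathfrak{p}$ is unramified in $K_{n-1}/K$, and valuations at $K$-primes lift unchanged to any prime $\mathfrak{P}$ of $K_{n-1}$ above $\mathfrak{p}$.

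Next, from $v_\mathfrak{p}(f^n(\gamma_1)) = \sum_j v_\mathfrak{P}(\alpha_j - f(\gamma_1))$ being odd, some index~$i$ has $v_\mathfrak{P}(\alpha_i - f(\gamma_1))$ odd, while $v_\mathfrak{p}(f^n(\gamma_2)) = 0$ forces $v_\mathfrak{P}(\alpha_j - f(\gamma_2)) = 0$ for every~$j$. Therefore $v_{\mathfrak{P}_i}(\textup{Disc}(f(x)-\alpha_i))$ is odd, where $\mathfrak{P}_i = \mathfrak{P}\cap K(\alpha_i)$. By Dedekind's Discriminant Theorem (tame, since $v_\mathfrak{p}(3) = 0$), the only cubic local factorization pattern with odd discriminant-exponent is $(e,f) = (2,1) + (1,1)$, producing an inertia element acting as a transposition on the three roots of $f(x) - \alpha_i$. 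The Jordan primitivity argument from Theorem~\ref{thm:PPDsufficiency} then gives $\textup{Gal}(M_i/K(\alpha_i)) \cong S_3$, where $M_i$ denotes the splitting field of $f(x) - \alpha_i$.

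Finally, I would establish independence as in the proof of Theorem~\ref{thm:PPDsufficiency}: setting $\widehat{M_i} = K_{n-1}\prod_{k\ne i} M_k$, show that $\mathfrak{P}$ cannot divide $\textup{Disc}(f(x) - \alpha_k)$ for any $k\ne i$, as otherwise either $\mathfrak{P}\mid(\alpha_i - \alpha_k)\mid \textup{Disc}(f^{n-1})$ or $\mathfrak{p}\mid f^n(\gamma_2)$, each contradicting our hypotheses. Hence $\mathfrak{P}$ is unramified in $\widehat{M_i}/K_{n-1}$, and tracing the ramification of $\mathfrak{P}$ in $M_i/K(\alpha_i)$ through the compositum $K_n = \widehat{M_i}\cdot M_i$ produces an inertia element in $\textup{Gal}(K_n/\widehat{M_i})$ whose restriction to $K_{n-1}M_i$ is a transposition. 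Since $\widehat{M_i}/K_{n-1}$ is Galois (being a compositum of Galois extensions $K_{n-1}M_k/K_{n-1}$), the image of $\textup{Gal}(K_n/\widehat{M_i})$ in $\textup{Gal}(K_{n-1}M_i/K_{n-1})\cong S_3$ is a normal subgroup containing a transposition, hence all of $S_3$; together with the injectivity of restriction (because $\widehat{M_i}\cdot K_{n-1}M_i = K_n$), this forces $\textup{Gal}(K_n/\widehat{M_i}) \cong S_3$. Moving $\mathfrak{P}$ around by $\textup{Gal}(K_{n-1}/K)$ handles each index $i$ symmetrically, so $\textup{Gal}(K_n/K_{n-1}) \cong S_3^{3^{n-1}}$ and $[K_n:K_{n-1}] = 6^{3^{n-1}}$. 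The main obstacle is the careful bookkeeping of primes through the tower $K\subset K(\alpha_i)\subset K_{n-1}\subset \widehat{M_i}\subset K_n$ to guarantee that the transposition produced in $M_i/K(\alpha_i)$ survives intact as inertia in $K_n/\widehat{M_i}$ at the top; this is precisely where the unramifiedness of $\mathfrak{p}$ in $K_{n-1}$ is essential.
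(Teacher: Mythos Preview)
Your proposal is correct and follows essentially the same route as the paper's proof: set up $M_i$ and $\widehat{M_i}$, use the oddness of $v_{\mathfrak p}(f^n(\gamma_1))$ to produce a prime of $K(\alpha_i)$ at which $\operatorname{Disc}(f(x)-\alpha_i)$ has odd valuation, extract a transposition in inertia, and then invoke the independence argument from the proof of Theorem~\ref{thm:PPDsufficiency} to obtain $\operatorname{Gal}(K_n/\widehat{M_i})\cong S_3$ for every $i$. The only cosmetic difference is that the paper rules out the triple-root pattern by observing it would force $\gamma_1\equiv\gamma_2\pmod{\mathfrak p'}$ and hence $f^n(\gamma_1)\equiv f^n(\gamma_2)$, whereas you reach the same conclusion by reading off the tame discriminant exponents directly from Theorem~\ref{thm:Deddisc}.
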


\begin{proof} 
Let $\alpha_1,\alpha_2, \dots, \alpha_{3^{n-1}}$ denote the roots of $f^{n-1}(x)$. As in Section  \ref{section:openimage}, let $M_i=K(f^{-1}(\alpha_i))$ and $\widehat{M_i}=K_{n-1}\prod_{i\neq j} M_j$. 

Since $v_{\mathfrak{p}}(f^n(\gamma_1))$ is odd, we must have $v_\mathfrak{p'}(\Disc(f(x)-\alpha_i)$ is odd for some prime $\mathfrak{p}'$ of $K(\alpha_i)$ lying over $\mathfrak{p}$. Thus, $\mathfrak{p}'$ must ramify in $M_i$, and $f(x)-\alpha_i$ must have at least a double root modulo $\mathfrak{p}'$. If this root had multiplicity 3, then $f'(x)$ has a double root modulo $\mathfrak{p}'$, so $\gamma_1\equiv\gamma_2\mod \mathfrak{p}'$ and hence $f^{n}(\gamma_1)\equiv f^{n}(\gamma_2) \mod \mathfrak{p}'$, contradicting our hypotheses. Hence, $f(x)-\alpha_i \equiv (x-\eta)^2(x-\xi) \mod \mathfrak{p}'$ and for any prime $\mathfrak{q}$ of $K(\alpha_i)$ lying over $\mathfrak{p}'$, $I(\mathfrak{q}|\mathfrak{p}')$ acts as a transposition on the roots of $f(x)-\alpha_i$. 

Following the arguments in the proof of Theorem~\ref{thm:PPDsufficiency}, we can see that $\Gal(K_n/\widehat{M_i})\cong S_3$ and hence $[K_n:K_{n-1}]=6^{3^{n-1}}$.
\end{proof}

Consider the cubic polynomials:
\[
g_1(z)=z^3- \frac{6012}{2755}z^2+\frac{12636}{13775}z+\frac{54}{95}
\]

and
\[
g_2(z)=z^3+7z^2-7.
\] 
In \cite{Combsthesis}, Combs shows that the image of the arboreal representation associated to $g_1$ is surjective. 
In \cite{Nicolespaper}, Looper shows that the image of the arboreal representation associated to $g_2$ is an index 2 subgroup of of $\textup{Aut}(T_\infty)$. Each of these examples can be shown to satisfy the hypotheses of Proposition \ref{prop: cubicconditions}.

\subsection{A family of degree $2$ rational maps}
In order to give an example of a family of quadratic rational map with surjective arboreal Galois representation, we follow the conventions of \cite{JonesManes2014}. To this end, we define the following notation:

\begin{tabular}{l l }
$f^n=\frac{P_n}{Q_n}$ & where $P_n$ and $Q_n$ are polynomials; \\
$\ell(r)$  &the leading coefficient of a polynomial $r$;\\ 
$v_{p}(n)$ & the $p$-adic valuation of $n$ for some prime $p$,\\
 & i.e. if $n = p^\nu d$ with $p \nmid d$, then $v_p(n) = \nu$; and\\
$\Res(Q,P)$ & the resultant of the polynomials $P$ and $Q$.
\end{tabular}

Proposition~\ref{prop: cubicconditions} is analogous to a result of Jones and Manes ~\cite[Corollary 3.8]{JonesManes2014}, which we state in the following theorem.


\begin{thm}[{\cite[Corollary 3.8]{JonesManes2014}}]\label{jonesmanescor}
Let $f=\frac{P_1(x)}{Q_1(x)}\in K(x)$ have degree $2$, let $c=Q_1P_1'-P_1Q_1',$ and suppose that $f^n(\infty)\neq 0$ for all $n\geq 1$ and that $f$ has two finite critical points $\gamma_1,$ $\gamma_2$ with $f(\gamma_i)\neq \infty$ for each $i.$ Suppose further that there exists a prime $\mathfrak{p}$ of $K$ with $v_{\mathfrak{p}}(P_n(\gamma_1)P_n(\gamma_2))$ odd and
\[
0=v_{\mathfrak{p}}(\ell(P_1))=v_{\mathfrak{p}}(\ell(c))=v_{\mathfrak{p}}(\Res(Q_1,P_1))=v_\mathfrak{p}(\Disc \ P_1)=v_\mathfrak{p}(P_j(\gamma_i))
\]

for $1\leq i\leq 2,$ $2\leq j \leq n-1.$ Then $[K_n:K_{n-1}]=2^{2^{n-1}}.$
\end{thm}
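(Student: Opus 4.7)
The proof strategy parallels that of Proposition \ref{prop: cubicconditions}, but adapted to the rational-function setting by replacing the polynomial discriminant identity \eqref{eqn:disciteratesformula} with its analogue for $P_1 - \alpha Q_1$. First, under the hypothesis $f^n(\infty) \neq 0$ one checks that $\deg P_{n-1} = 2^{n-1}$ and that the roots $\alpha_1, \ldots, \alpha_{2^{n-1}}$ of $P_{n-1}$ are exactly the finite preimages of $0$ under $f^{n-1}$; by the standing separability assumption they are distinct. For each $i$, the degree-$2$ polynomial $h_i(x) := P_1(x) - \alpha_i Q_1(x) \in K(\alpha_i)[x]$ has roots precisely equal to the two $f$-preimages of $\alpha_i$, so $M_i := K(\alpha_i)(f^{-1}(\alpha_i))$ is its splitting field.

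The key computational input is a discriminant identity of the form
\begin{equation*}
\operatorname{Disc}_x(h_i) \;=\; (\text{unit involving } \ell(P_1), \ell(c)) \cdot h_i(\gamma_1) \, h_i(\gamma_2),
\end{equation*}
analogous to \eqref{eqn:disciteratesformula} but expressed via the critical polynomial $c = Q_1 P_1' - P_1 Q_1'$. Taking the norm from $K(\alpha_i)$ down to $K$ and multiplying over all $i$ produces, up to factors supported away from $\mathfrak{p}$ by the hypotheses $v_\mathfrak{p}(\ell(P_1)) = v_\mathfrak{p}(\ell(c)) = v_\mathfrak{p}(\operatorname{Res}(Q_1, P_1)) = 0$, the product $P_n(\gamma_1) P_n(\gamma_2)$. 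Since this product has odd $\mathfrak{p}$-adic valuation, some single factor $\operatorname{Disc}_x(h_i)$ must have odd valuation at a prime $\mathfrak{p}'$ of $K(\alpha_i)$ lying above $\mathfrak{p}$. The conditions $v_\mathfrak{p}(\operatorname{Disc} P_1) = 0$ and $v_\mathfrak{p}(P_j(\gamma_i)) = 0$ for $2 \le j \le n-1$ are exactly what is needed so that this oddness cannot be blamed on earlier levels of the tower.

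An odd-valuation discriminant at $\mathfrak{p}'$ in a quadratic extension forces ramification by Theorem \ref{thm:Deddisc}; since $d=2$, the nontrivial element of the inertia group at any prime of $M_i$ above $\mathfrak{p}'$ must act as the transposition in $S_2 = \operatorname{Gal}(M_i/K(\alpha_i))$. From here the argument follows the template of Theorem \ref{thm:PPDsufficiency}: the good-reduction hypotheses imply that $\mathfrak{p}$ is unramified in $K_{n-1}$, so the ramification lifts to $K_{n-1} M_i / K_{n-1}$, and then to $K_n / \widehat{M_i}$ where $\widehat{M_i} = K_{n-1} \prod_{j \neq i} M_j$, using that any other $M_j$ picking up ramification at $\mathfrak{p}$ would contradict the valuation hypotheses. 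Since $\operatorname{Gal}(K_n / \widehat{M_i})$ is a nontrivial subgroup of $S_2$, it equals $S_2$, and independence across the $2^{n-1}$ roots $\alpha_i$ yields $\operatorname{Gal}(K_n / K_{n-1}) \cong S_2^{2^{n-1}}$ of order $2^{2^{n-1}}$.

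The main obstacle is establishing the exact discriminant identity for $h_i$ in terms of $c(\gamma_1), c(\gamma_2)$ and the leading coefficients $\ell(P_1), \ell(Q_1)$, together with carefully tracking how $\operatorname{Res}(Q_1, P_1)$ enters when one converts between $h_i(\gamma_j)$ and the value $P_n(\gamma_j)$. Once this bookkeeping is pinned down, the listed valuation hypotheses are precisely the conditions under which every spurious factor is a $\mathfrak{p}$-unit, and the rest of the Galois-theoretic argument transfers verbatim from the polynomial case.
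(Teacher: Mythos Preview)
The paper does not prove this statement; it is quoted verbatim from \cite[Corollary~3.8]{JonesManes2014} and used as a black box in the proof of Theorem~\ref{thm: partialresults1}. Your sketch is a reasonable reconstruction of the Jones--Manes argument, and you are right that it is the rational-map analogue of Proposition~\ref{prop: cubicconditions}: one replaces the polynomial discriminant formula~\eqref{eqn:disciteratesformula} by a discriminant computation for $h_i(x)=P_1(x)-\alpha_iQ_1(x)$, then runs the inertia/transposition argument of Theorem~\ref{thm:PPDsufficiency} with $d=2$.

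A couple of points worth tightening in your write-up. First, you should justify that $h_i$ really has degree $2$: if the leading terms of $P_1$ and $\alpha_iQ_1$ cancelled, then $\infty$ would be an $f$-preimage of $\alpha_i$, hence $f^n(\infty)=0$, contrary to hypothesis. Second, in the norm computation $\prod_i h_i(\gamma_j)$ one picks up powers of $Q_1(\gamma_j)$ and of $\ell(P_{n-1})$, and relating $P_{n-1}(f(\gamma_j))$ to $P_n(\gamma_j)$ brings in $Q_{n-1}(f(\gamma_j))$ as well; the hypotheses $v_{\mathfrak p}(\ell(P_1))=v_{\mathfrak p}(\operatorname{Res}(Q_1,P_1))=0$ are exactly what guarantee all of these auxiliary factors are $\mathfrak p$-units (since any prime dividing $Q_m(\gamma_j)$ must divide $\operatorname{Res}(Q_1,P_1)$). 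You correctly identified this bookkeeping as the crux; once it is carried out, the Galois-theoretic half is indeed identical to the cubic case already in the paper.
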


	Consider the family
	\[
	f_b(z)=\frac{z^2-2bz+1}{(-2+2b)z},
	\]
where $b\neq 1$ is an algebraic number.

    
   We will use Theorem~\ref{jonesmanescor} to show that an infinite number of members of this family have surjective arboreal representation over $\mathbb{Q}$. 
  This is the first example of an infinite family of non-polynomial rational maps having finite index arboreal representation. Prior to this work, there was a single non-polynomial quadratic  map that was known to have finite index; this example was given in~\cite{JonesManes2014}. 
  
  In the context of this particular family, we see that:
    
    \begin{align*}
    P_1(x)&=z^2-2bz+1,\\
    Q_1(x)&=(-2+2b)z,\\
    \ell(P_1)&=1,\\
    \Disc(P_1)&=4(b^2-1),\\
    c&=(-2+2b)z(2z-2b)-(z^2-2bz+1)(-2+2b)\\
    &=(-2+2b)(z^2-1), \\
	\ell(c)&=2(b-1),\\
    \Res(Q_1,P_1)&=4(b-1)^2,\\
    P_n(z)&=P_{n-1}(z)^2-2bP_{n-1}(z)Q_{n-1}(z)+Q_{n-1}(z)^2,\\
    Q_n(z)&=2(b-1)P_{n-1}(z)Q_{n-1}(z).
\end{align*}
   
	The point $\infty$ is a fixed point for this family. Since every member of this family maps $0$ to $\infty,$ we see that $0$ is always \emph{strictly} preperiodic. The critical points of this family are 1 and $-1$. The critical point $1$ is mapped by every member of the family to the critical point $-1,$ so this family's critical orbits collide. 
	    
	As long as the numerator of $f_b(z)$ is irreducible, the extension at level $n=1$ is maximal. Over $\QQ$, this is the case for any integer $b\neq 0,1$. We will show that, for all $n\geq 2$, the numerator $P_n(-1)$ has a primitive prime divisor with odd valuation that does not divide $2(b-1)(b+1)$.  
	
    

\begin{lemma}\label{L:divide denom forever}
Suppose $b\neq 1$ is an integer. Let $p$ be a prime and suppose that $p\mid Q_i(-1)$ for some $i\geq 1$. Then, for all $j>i$, we have that $p\mid Q_j(-1)$.
\end{lemma}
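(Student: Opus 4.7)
The proof will be a direct induction on $j$ using the recursive formula for $Q_n$ already recorded in the paper. Specifically, the recursion
\[
Q_n(z) = 2(b-1)\, P_{n-1}(z)\, Q_{n-1}(z)
\]
immediately gives, upon substituting $z = -1$,
\[
Q_j(-1) = 2(b-1)\, P_{j-1}(-1)\, Q_{j-1}(-1)
\]
for every $j \geq 1$. Hence divisibility by $p$ propagates from level $j-1$ to level $j$.

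The plan is as follows. First I would state the recursion at $z=-1$ as the key formula. Then I would set up an induction on $j \geq i+1$ with base case $j = i+1$: since $p \mid Q_i(-1)$ by hypothesis, the displayed recursion shows $p \mid Q_{i+1}(-1)$ (regardless of the values of $b-1$ and $P_i(-1)$, as $Q_i(-1)$ already carries the factor of $p$). For the inductive step, assuming $p \mid Q_{j-1}(-1)$ for some $j-1 \geq i+1$, the same recursion shows $p \mid Q_j(-1)$.

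There is no real obstacle; the lemma is a purely formal consequence of the multiplicative form of the $Q_n$ recursion, which exhibits $Q_{j-1}(-1)$ as a factor of $Q_j(-1)$. The integrality hypothesis $b \in \mathbb{Z}$ ensures that the statement ``$p \mid Q_j(-1)$'' makes sense as a divisibility of integers at every level (the $P_n(-1)$ and $Q_n(-1)$ remain rational integers by induction, since $P_n, Q_n \in \mathbb{Z}[z]$), so no denominators interfere with the divisibility claim.
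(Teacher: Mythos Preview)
Your proposal is correct and follows essentially the same approach as the paper: both use the recursion $Q_{j}(-1)=2(b-1)P_{j-1}(-1)Q_{j-1}(-1)$ and induct to propagate divisibility from level $i$ to all higher levels. The paper's version is simply more terse.
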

\begin{proof}
Suppose that $p\mid  Q_i(-1)$. Then, $$Q_{i+1}(-1)=2(b-1)P_i(-1)Q_i(-1),$$ so $p\mid Q_{i+1}~(-1)$. By induction, the result follows.
\end{proof}

\begin{lemma}\label{L:relatively prime}
Suppose $b\neq1$ is an integer. Then, for $n\geq1$, the only possible common divisors of $P_n(-1)$ and $Q_n(-1)$ are powers of $2$. 
\end{lemma}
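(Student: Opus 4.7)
The plan is to prove the lemma by strong induction on $n$, showing that any odd prime $p$ dividing both $P_n(-1)$ and $Q_n(-1)$ leads to a contradiction.

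For the base case $n=1$, I would compute directly from the displayed formulas above: $P_1(-1) = 2(b+1)$ and $Q_1(-1) = -2(b-1)$. Any common divisor of $b+1$ and $b-1$ divides their difference $(b+1)-(b-1)=2$, so $\gcd(P_1(-1), Q_1(-1))$ divides $4$, which is a power of $2$.

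For the inductive step, the two ingredients are the recursions $P_n = P_{n-1}^2 - 2b P_{n-1} Q_{n-1} + Q_{n-1}^2$ and $Q_n = 2(b-1)P_{n-1}Q_{n-1}$ from the list above, evaluated at $z=-1$. Before any case analysis I would record the easy auxiliary fact that $(b-1) \mid Q_m(-1)$ for every $m \geq 1$; this follows by induction on $m$ from the recursion for $Q_m$ and the base value $Q_1(-1) = -2(b-1)$. Now assume $p$ is an odd prime with $p \mid P_n(-1)$ and $p \mid Q_n(-1)$. From the $Q_n$ recursion, $p$ divides $(b-1) P_{n-1}(-1) Q_{n-1}(-1)$. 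The auxiliary divisibility folds the subcase $p \mid (b-1)$ into the subcase $p \mid Q_{n-1}(-1)$, so without loss of generality $p$ divides either $P_{n-1}(-1)$ or $Q_{n-1}(-1)$. Combining this with $p \mid P_n(-1)$ and the $P_n$ recursion shows $p$ also divides the square of the other factor, hence divides the other factor as well. Therefore $p \mid \gcd(P_{n-1}(-1), Q_{n-1}(-1))$, which by the inductive hypothesis is a power of $2$, contradicting that $p$ is odd.

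The only subtle point, and the one I would be most careful about, is not letting the case $p \mid (b-1)$ with $p$ coprime to $P_{n-1}(-1)$ and $Q_{n-1}(-1)$ slip out of the induction; the auxiliary divisibility $(b-1) \mid Q_{n-1}(-1)$ is what eliminates that possibility. Everything else is a clean two-step unpacking of the recursions.
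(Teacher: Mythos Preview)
Your proof is correct and follows essentially the same approach as the paper's. Both argue by induction, dispatch the base case by observing that any common divisor of $P_1(-1)=2(b+1)$ and $Q_1(-1)=-2(b-1)$ must divide $4$, and in the inductive step fold the subcase $p\mid(b-1)$ into $p\mid Q_{n-1}(-1)$ before using the two recursions; the paper does this last step via the preceding lemma that primes persist in $Q_j(-1)$, while you use the equivalent direct observation $(b-1)\mid Q_m(-1)$.
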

\begin{proof}
We proceed by induction. For the base case, we remark that $P_1(-1)=2+2b$, and $Q_1(-1)=2-2b$. Thus, any common factor of $P_1(-1)$ and $Q_1(-1)$ must divide $P_1(-1)+Q_1(-1)=4,$ which gives the base case.

Suppose that the result is true for $P_{n}(-1)$ and $Q_{n}(-1)$. We wish to show that the same is true for $P_{n+1}(-1)$ and $Q_{n+1}(-1)$.

Suppose that $p$ is an odd prime dividing $Q_{n+1}(-1)$. Since 
\[Q_{n+1}(-1)=2(b-1)P_{n}(-1)Q_{n}(-1),\] 
it follows that $p\mid (b-1)$, or that $p\mid   Q_{n}(-1)$, or that $p\mid   P_{n}(-1)$. Moreover, if $p\mid (b-1)$, then $p\mid   Q_{n}(-1)$ by Lemma \ref{L:divide denom forever}. Thus,  either  $p\mid   Q_{n}(-1)$, or $p\mid   P_{n}(-1)$; by the inductive hypothesis, $p$ cannot divide both.

Suppose that  $p\mid   Q_{n}(-1)$ and $p\nmid   P_{n}(-1)$; the argument in the other case is identical.
Then, \[P_{n+1}(-1)=P_{n}(-1)^2-2bP_{n}(-1)Q_{n}(-1)+P_{n}(-1)^2\equiv P_{n}(-1)^2\not\equiv 0\mod{p},\]
so $p\nmid P_{n+1}(-1)$.
\end{proof}

\begin{lemma} \label{L:only in num once}
Suppose $b\neq 1$ is an integer. Let $p$ be an odd prime and suppose that $p\mid P_i(-1)$ for some $i\geq 1$. Then, for all $j>i$, we have that $p\mid Q_j(-1)$ and $p\nmid P_j(-1)$. 
\end{lemma}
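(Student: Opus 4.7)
The plan is to run a short induction on $j>i$, using the two recursions
\[
P_{n+1}(-1)=P_n(-1)^2-2b\,P_n(-1)Q_n(-1)+Q_n(-1)^2,\qquad Q_{n+1}(-1)=2(b-1)P_n(-1)Q_n(-1),
\]
together with the coprimality result of Lemma~\ref{L:relatively prime} and the absorbing property of Lemma~\ref{L:divide denom forever}. The only real content is the base step $j=i+1$; afterwards the claim propagates essentially automatically.

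For the base step, assume $p\mid P_i(-1)$ with $p$ an odd prime. Then directly from $Q_{i+1}(-1)=2(b-1)P_i(-1)Q_i(-1)$ we get $p\mid Q_{i+1}(-1)$. For the non-divisibility of $P_{i+1}(-1)$, reduce the recursion modulo $p$: the first two terms vanish because $p\mid P_i(-1)$, leaving $P_{i+1}(-1)\equiv Q_i(-1)^2\pmod p$. By Lemma~\ref{L:relatively prime}, $p$ (being odd and dividing $P_i(-1)$) cannot also divide $Q_i(-1)$, so $P_{i+1}(-1)\not\equiv 0\pmod p$. This gives the $j=i+1$ case.

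For the inductive step, suppose the statement holds for some $j>i$, i.e. $p\mid Q_j(-1)$ and $p\nmid P_j(-1)$. Then $p\mid Q_{j+1}(-1)$ is immediate from Lemma~\ref{L:divide denom forever} (or directly from the recursion for $Q_{j+1}$). For $P_{j+1}(-1)$, reduce the recursion modulo $p$: now the last two terms vanish because $p\mid Q_j(-1)$, leaving $P_{j+1}(-1)\equiv P_j(-1)^2\pmod p$, which is nonzero modulo $p$ by the inductive hypothesis. This closes the induction.

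There is really no obstacle here: the whole argument is a mechanical consequence of the two recursions and the previous two lemmas, and the only place oddness of $p$ is used is to invoke Lemma~\ref{L:relatively prime} at the base step (so that $p\nmid Q_i(-1)$, ensuring $Q_i(-1)^2\not\equiv 0\pmod p$). If I wanted a more conceptual phrasing, I would note that once a prime $p$ first enters the numerator at level $i$, the recursion symmetrically swaps its residue data between numerator and denominator thereafter, trapping $p$ permanently in the denominator — but a two-line induction is cleaner.
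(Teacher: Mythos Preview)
Your proof is correct and uses the same ingredients as the paper (the recursions together with Lemmas~\ref{L:divide denom forever} and~\ref{L:relatively prime}). The paper's argument is marginally shorter: after observing $p\mid Q_{i+1}(-1)$ and invoking Lemma~\ref{L:divide denom forever} to get $p\mid Q_j(-1)$ for all $j>i$, it simply applies Lemma~\ref{L:relatively prime} at each level $j$ to conclude $p\nmid P_j(-1)$ directly, rather than inducting on the recursion for $P$ as you do.
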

\begin{proof}
Suppose $p$ is an odd prime and $p\mid P_i(-1)$. Since $$Q_{i+1}(-1)=2(b-1)P_i(-1)Q_i(-1),$$ it follows that $p\mid Q_{i+1}(-1)$. By Lemma \ref{L:divide denom forever}, it follows that $p\mid Q_j(-1)$ for all $j>i$. Since $p$ is an odd prime, it follows by Lemma \ref{L:relatively prime} that $p\nmid P_j(-1)$ for all $j>1$.
\end{proof}

 \begin{lemma}\label{lem: primprime}  Suppose $b\neq1$ is an integer. If $p$ is an odd prime divisor of $P_n(-1)$, then $p$ is a primitive prime divisor. Further, for any $n\geq 2$, the odd primes dividing $P_n(-1)$ do not divide $2(b-1)(b+1)$.
 \end{lemma}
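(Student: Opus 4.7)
The plan is to derive both assertions directly from Lemmas \ref{L:divide denom forever}, \ref{L:relatively prime}, and \ref{L:only in num once}, which have already done the structural work. For the primitivity statement, I would argue by contradiction: suppose $p$ is an odd prime with $p \mid P_n(-1)$ and also $p \mid P_i(-1)$ for some $1 \le i < n$. Then Lemma \ref{L:only in num once} applied to the index $i$ gives $p \nmid P_j(-1)$ for every $j > i$, and taking $j = n$ contradicts the hypothesis. Hence $p$ divides $P_n(-1)$ but no earlier $P_i(-1)$, which is the definition of a primitive prime divisor.

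For the second assertion, I would begin by computing the initial values explicitly: $P_1(-1) = (-1)^2 - 2b(-1) + 1 = 2(b+1)$ and $Q_1(-1) = (-2+2b)(-1) = -2(b-1)$. Now suppose $p$ is an odd prime dividing $P_n(-1)$ with $n \ge 2$, and split into two cases according to whether $p$ divides $b+1$ or $b-1$. In the first case, $p \mid P_1(-1)$, so by Lemma \ref{L:only in num once} with $i=1$ and $j=n \ge 2$, we would have $p \nmid P_n(-1)$, contradicting the assumption. In the second case, $p \mid Q_1(-1)$, so Lemma \ref{L:divide denom forever} yields $p \mid Q_n(-1)$; but then $p$ is an odd common divisor of $P_n(-1)$ and $Q_n(-1)$, contradicting Lemma \ref{L:relatively prime} which restricts such common divisors to powers of $2$. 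Since $p$ is odd by hypothesis, it does not divide $2$ either, so $p \nmid 2(b-1)(b+1)$.

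There is no real obstacle here; the proof is essentially a bookkeeping exercise combining the three preceding lemmas with the explicit values $P_1(-1) = 2(b+1)$ and $Q_1(-1) = -2(b-1)$. The only mild subtlety is to be careful that Lemma \ref{L:only in num once} requires the prime to be odd (which it is by hypothesis) and to observe that the hypothesis $n \ge 2$ in the second assertion is essential, since $P_1(-1) = 2(b+1)$ itself is divisible by the odd prime factors of $b+1$.
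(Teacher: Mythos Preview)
Your proposal is correct and follows essentially the same approach as the paper's own proof: both parts proceed by contradiction using Lemma~\ref{L:only in num once} for primitivity, and the second assertion is handled via the explicit values $P_1(-1)=2(b+1)$, $Q_1(-1)=2-2b$ together with Lemmas~\ref{L:divide denom forever}, \ref{L:relatively prime}, and~\ref{L:only in num once}. Your case split on whether $p\mid b+1$ or $p\mid b-1$ makes the invocation of the three lemmas slightly more explicit than the paper's terse citation, but the argument is the same.
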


\begin{proof}
Suppose that $p$ is an odd prime divisor of $P_n(-1)$. Suppose further that $p\mid P_i(-1)$ for some $i<n$. By Lemma \ref{L:only in num once}, it follows that $p\nmid P_j(-1)$ for all $j>i$; in particular, this implies that $p\nmid P_n(-1)$, a contradiction. Hence, any odd prime dividing $P_n(-1)$ is primitive. 

Note that $P_1(-1)=2+2b$, and $Q_1(-1)=2-2b$. Any odd prime $p$ dividing $2(b-1)(b+1)$ must therefore divide $P_1(-1)=2+2b$ or $Q_1(-1)=2-2b$. By Lemmas \ref{L:divide denom forever}, \ref{L:relatively prime}, and \ref{L:only in num once}, it follows that $p\nmid P_n(-1)$ for $n>1$.


 
	

\end{proof}

\begin{lemma}\label{lem: 2adicval} Let $b$ be an even integer. Then $v_2(P_n(-1))=v_2(Q_n(-1))=2^n-1$.
\end{lemma}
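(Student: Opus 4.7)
The plan is to proceed by induction on $n$. For the base case $n=1$, observe that $P_1(-1) = 2+2b = 2(1+b)$ and $Q_1(-1) = 2-2b = 2(1-b)$; since $b$ is even, $1\pm b$ is odd, so both valuations equal $1 = 2^1-1$.

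For the inductive step, assume $v_2(P_n(-1)) = v_2(Q_n(-1)) = 2^n - 1$. The denominator is the easy half: from the recursion $Q_{n+1}(-1) = 2(b-1)P_n(-1)Q_n(-1)$, and since $b-1$ is odd, we get
\[
v_2(Q_{n+1}(-1)) = 1 + (2^n-1) + (2^n-1) = 2^{n+1}-1.
\]

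The numerator is the part that requires real attention. Write $P_n(-1) = 2^{2^n-1}\alpha$ and $Q_n(-1) = 2^{2^n-1}\beta$ with $\alpha,\beta$ odd. Then
\[
P_{n+1}(-1) = P_n(-1)^2 - 2bP_n(-1)Q_n(-1) + Q_n(-1)^2 = 2^{2^{n+1}-2}\bigl(\alpha^2 - 2b\alpha\beta + \beta^2\bigr),
\]
so it suffices to show that the bracketed factor has $2$-adic valuation exactly $1$. Because $\alpha$ and $\beta$ are odd, $\alpha^2 \equiv \beta^2 \equiv 1 \pmod{8}$, hence $\alpha^2+\beta^2 \equiv 2 \pmod{8}$. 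Meanwhile $b$ is even, so $2b\alpha\beta \equiv 0 \pmod{4}$. Combining these congruences gives $\alpha^2 - 2b\alpha\beta + \beta^2 \equiv 2 \pmod{4}$, so its $2$-adic valuation is exactly $1$. Therefore $v_2(P_{n+1}(-1)) = (2^{n+1}-2) + 1 = 2^{n+1}-1$, completing the induction.

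The only subtlety is ensuring that the middle term $-2b\alpha\beta$ does not conspire with $\alpha^2+\beta^2$ to raise the valuation; the hypothesis that $b$ is even is what guarantees this, since it forces $2b\alpha\beta \equiv 0 \pmod{4}$ while $\alpha^2+\beta^2$ sits at exactly $2 \pmod{4}$. No further machinery is required.
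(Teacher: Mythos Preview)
Your proof is correct and follows essentially the same approach as the paper's: induction on $n$, with the base case handled via $P_1(-1)=2(1+b)$, $Q_1(-1)=2(1-b)$, and the inductive step by factoring out $2^{2^n-1}$ from $P_n(-1)$ and $Q_n(-1)$ and reducing the bracketed expression $\alpha^2-2b\alpha\beta+\beta^2$ modulo $4$. The only cosmetic differences are your choice of variable names ($\alpha,\beta$ versus the paper's $u_{n-1},w_{n-1}$) and your slightly more explicit mod-$8$ computation for $\alpha^2+\beta^2$ before dropping to mod $4$.
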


\begin{proof} We proceed by induction on $n$. Since $b$ is even $v_2(2+2b)=v_2(2-2b)=1$ so the result holds for $n=1$. Now suppose $v_2(P_{n-1}(-1))=v_2(Q_{n-1}(-1))=2^{n-1}-1$. Write $P_{n-1}=2^{2^{n-1}-1}u_{n-1}$ and $Q_{n-1}=2^{2^{n-1}-1}w_{n-1}$ where $u_{n-1}$ and $w_{n-1}$ are relatively prime odd integers. Then
\begin{align*}
Q_n(-1) &= 2(b-1)2^{2^{n-1}-1}u_{n-1}2^{2^{n-1}-1}w_{n-1}\\
&=2^{2^n-1}(b-1)u_{n-1}w_{n-1}
\end{align*}
and
\begin{align*}
P_n(-1)&=(2^{2^{n-1}-1}u_{n-1})^2-2b(2^{2^{n-1}-1}u_{n-1})(2^{2^{n-1}-1}w_{n-1})+(2^{2^{n-1}-1}w_{n-1})^2\\
&=2^{2^{n}-2}(u_{n-1}^2-2bu_{n-1}w_{n-1}+w_{n-1}^2).
\end{align*}
Since $u_{n-1}$ and $w_{n-1}$ are odd and $b$ is even, $u_{n-1}^2-2bu_{n-1}w_{n-1}+w_{n-1}^2\equiv 2 \mod 4$. Hence $v_2(P_{n})=v_2(Q_n)=2^n-1$ as desired. 
\end{proof}

\begin{lemma}\label{lem: bequiv4}  Let $b\equiv 2\mod 4$ and $b>0$. Then $P_n(-1)=2^{2^n-1}u_n$ where $u_n$ is odd and $u_n\neq \pm y^2$. 
 \end{lemma}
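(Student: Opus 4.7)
The plan is to leverage the factorization $P_n(-1) = 2^{2^n-1} u_n$ with $u_n$ odd that Lemma~\ref{lem: 2adicval} has already produced, and to sharpen it with two additional assertions that jointly forbid $u_n$ from being $\pm$ a perfect square: for every $n \geq 1$ I would prove
\[
u_n > 0 \qquad \text{and} \qquad u_n \equiv 3 \pmod 4.
\]
Together these imply $u_n \neq \pm y^2$ for any integer $y$, since any odd square is $\equiv 1 \pmod 4$ (ruling out $u_n = y^2$), while $-y^2 \leq 0 < u_n$ rules out $u_n = -y^2$.

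To execute this I would carry along the analogous odd part $w_n$ of $Q_n(-1) = 2^{2^n-1} w_n$ and prove by simultaneous induction on $n$ that $u_n > 0$, $w_n < 0$, and $u_n \equiv 3 \pmod 4$. The base case $n=1$ is immediate from $u_1 = 1+b$, $w_1 = 1-b$, together with $b \equiv 2 \pmod 4$ and $b \geq 2$. For the inductive step I would reuse the recursions from the proof of Lemma~\ref{lem: 2adicval}, namely
\[
u_n = \tfrac{1}{2}\bigl(u_{n-1}^2 + w_{n-1}^2 - 2b\, u_{n-1} w_{n-1}\bigr), \qquad w_n = (b-1)\, u_{n-1} w_{n-1}.
\]
The sign claims are visible directly from these recursions: $u_{n-1} w_{n-1} < 0$ by the inductive hypothesis, so the numerator of $u_n$ is a sum of three positive quantities, and $w_n$ is a product of factors with signs $+,+,-$. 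For the congruence, odd squares reduce to $1 \pmod 8$, while $b \equiv 2 \pmod 4$ forces $2b \equiv 4 \pmod 8$, whence the numerator of $u_n$ is $\equiv 2 - 4 \equiv 6 \pmod 8$ and therefore $u_n \equiv 3 \pmod 4$.

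I do not foresee any substantive obstacle. The $2$-adic computation is already in place from the preceding lemma; the only new ingredients are sign bookkeeping for $w_n$ and a mild mod-$8$ refinement, both driven by the single assumption $b \equiv 2 \pmod 4$ with $b > 0$. If anything, the one point warranting care is that the mod-$8$ calculation simultaneously confirms that the numerator is exactly divisible by~$2$, so that $u_n$ is indeed an odd integer, consistent with Lemma~\ref{lem: 2adicval}.
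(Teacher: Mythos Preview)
Your proposal is correct and follows essentially the same approach as the paper: the paper also argues by induction that $P_n(-1)>0$ and $Q_n(-1)<0$ to rule out $u_n=-y^2$, and then uses the recursion $2u_n=u_{n-1}^2-2bu_{n-1}w_{n-1}+w_{n-1}^2$ from Lemma~\ref{lem: 2adicval} together with $b\equiv 2\pmod 4$ to obtain $2u_n\equiv 6\pmod 8$, i.e.\ $u_n\equiv 3\pmod 4$, ruling out $u_n=y^2$. Your write-up is slightly more explicit about the sign bookkeeping and the base case, but the argument is the same.
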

 
 \begin{proof}
 We can see by induction that $P_n(-1)>0$ and $Q_n(-1)<0$. Thus, $u_n>0$ so $u_n\neq -y^2$ for any integer $y$. By the proof of Lemma \ref{lem: 2adicval}, 
 \[2u_n=u_{n-1}^2-2bu_{n-1}w_{n-1}+w_{n-1}^2,\] where $u_{n-1}$ and $w_{n-1}$ are odd. Thus, $2u_n \equiv 6 \mod 8$. This implies $u_n\equiv 3$ or $7 \mod 8$ and hence $u_n\neq y^2$ for any integer $y$.

 \end{proof}
 
	
	
		 
	\begin{lemma}\label{lem: mod8}
		Suppose $b\equiv 4\mod{8}$. Then $P_n(-1)=2^{2^n-1}u_n$ where $u_n$ is odd and $u_n\neq \pm y^2$. 
	\end{lemma}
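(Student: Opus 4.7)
The plan is to reduce the claim $u_n \neq \pm y^2$ to the single residue statement $u_n \equiv 5 \mod 8$ for all $n \geq 1$. Once this is in hand, the conclusion follows immediately: odd perfect squares are always $\equiv 1 \mod 8$, so $u_n \equiv 5 \mod 8$ rules out $u_n = y^2$, while $-u_n \equiv 3 \mod 8$ rules out $u_n = -y^2$. Unlike the sign argument in Lemma~\ref{lem: bequiv4}, this strategy does not rely on $b > 0$, which is essential since the hypothesis $b \equiv 4 \mod 8$ leaves both signs of $b$ open and in fact lets $u_n$ a priori take either sign as well.

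By Lemma~\ref{lem: 2adicval} we may write $P_n(-1) = 2^{2^n-1} u_n$ and $Q_n(-1) = 2^{2^n-1} w_n$ with $u_n, w_n$ odd, and the proof of that lemma gives the recursions $2u_n = u_{n-1}^2 - 2b u_{n-1} w_{n-1} + w_{n-1}^2$ and $w_n = (b-1) u_{n-1} w_{n-1}$. The heart of the argument will be an induction showing $(u_n, w_n) \in \{3,5\} \times \{3,5\} \mod 8$ for every $n \geq 1$. The base case follows from $u_1 = 1+b \equiv 5 \mod 8$ and $w_1 = 1-b \equiv 5 \mod 8$. For the inductive step, every element of $\{3,5\} \mod 8$ squares to $9 \mod 16$, so $u_{n-1}^2 + w_{n-1}^2 \equiv 18 \equiv 2 \mod 16$; combined with $2b \equiv 8 \mod 16$ and $u_{n-1} w_{n-1}$ odd, this yields $2u_n \equiv 2 - 8 \equiv 10 \mod 16$, i.e.\ $u_n \equiv 5 \mod 8$. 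For $w_n$, since $b-1 \equiv 3 \mod 8$ and products of two elements of $\{3,5\}$ lie in $\{1,7\} \mod 8$, we have $w_n \in 3 \cdot \{1,7\} \equiv \{3,5\} \mod 8$, closing the induction.

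I do not foresee a serious obstacle, but the calculation is sensitive in two ways that I expect to be the main pitfalls. First, one cannot track $u_n$ alone: the residue of $u_n \mod 8$ is controlled by $u_{n-1}^2 + w_{n-1}^2 \mod 16$, which depends on \emph{both} $u_{n-1}$ and $w_{n-1}$ modulo $8$, so the induction hypothesis must carry the joint information. Second, the argument uses the hypothesis $b \equiv 4 \mod 8$ precisely through $2b \equiv 8 \mod 16$; the weaker $b \equiv 0 \mod 4$ would allow $2b \equiv 0 \mod 16$ and collapse the $u_n$ recursion to $2 u_n \equiv 2 \mod 16$, giving only $u_n \equiv 1 \mod 8$, which is not enough to exclude $u_n$ being a square.
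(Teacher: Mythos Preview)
Your proposal is correct and is essentially the same argument as the paper's: both proceed by joint induction showing $u_n, w_n \in \{3,5\} \pmod 8$ (the paper writes this as $\pm 3 \pmod 8$), using the recursions from Lemma~\ref{lem: 2adicval} together with $2b \equiv 8 \pmod{16}$ to get $2u_n \equiv 10 \pmod{16}$, hence $u_n \equiv 5 \pmod 8$. Your presentation is slightly more explicit about why the joint hypothesis on both $u_{n-1}$ and $w_{n-1}$ is needed, but the substance is identical.
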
 
	
	\begin{proof}
    
    We claim that $u_n$ and $w_n$ are each congruent to $\pm 3 \mod 8$. We proceed by induction, first note that $u_1 = 1+b$ and $w_1 = 1-b$, since $b\equiv 4 \mod 8$ the result holds for $n=1$. 
        
 Now suppose and $u_{n-1}$ and $w_{n-1}$ are both $\pm 3\mod 8$. Then, 

 \[w_n=(b-1)u_{n-1}w_{n-1}\equiv \pm 3^3\equiv \pm 11\mod 16,\]
 and
 \begin{align*}
 2u_n&=u_{n-1}^2-2bu_{n-1}w_{n-1}+w_{n-1}^2\\
 &\equiv 9+8+9\equiv 10\mod 16.
 \end{align*}
  So $u_n$, must be congruent to 5 or 13 modulo 16. Proving the claim that $u_n$ and $w_n$ are each congruent to $\pm 3 \mod 8$.
  This shows that $u_n\neq \pm y^2$ for any integer $y$.
	\end{proof}

\begin{proof}[Proof of Theorem~$\ref{thm: partialresults1}$.]
By design $f_b^n(\infty)\neq 0$ for all $n\geq 1$ and $f_b$ has two finite critical points $1$ and $-1$ whose orbits collide. That is, $f_b(1)=-1$. Using arguments similar to those in Lemma \ref{L:only in num once} and Lemma \ref{lem: primprime}, any common factor of $P_n(1)$ and $Q_n(1)$ must divide both $P_1(1)$ and $Q_1(1)$ and hence must divide $2(b-1)$. Since $f_b^n(1)=f_b^{n-1}(-1)$, we have 
\[\frac{P_n(1)}{Q_n(1)}=\frac{P_{n-1}(-1)}{Q_{n-1}(-1)}.\] 
Thus, by Lemmas~\ref{lem: primprime}, \ref{lem: bequiv4}, and \ref{lem: mod8}, for each $n\geq 2$, we have a prime $p$ satisfying $v_{p}(P_n(1)P_n(-1))=v_p(P_n(-1))$ is odd and \[
0=v_{p}(\ell(P_1))=v_{p}(\ell(c))=v_{p}(\Res(Q_1,P_1))=v_{p}(\Disc \ P_1)=v_{p}(P_j(\pm 1)), 
\] for all $2\leq j\leq n-1$. Thus we can apply Theorem~\ref{jonesmanescor} to conclude that $[K_n:K_{n-1}]=2^{2n-1}$ for each $n$, proving $[\Aut(T_\infty):~G(f_b)]=1$.
\end{proof}

\begin{remark}
This argument does not readily extend to $b\equiv 2^n\mod {2^{n+1}}$ for $n\geq 3.$ We note that the above proof gives us squares in the base case when $2^n+1=y^2,$ and the induction step gives us that $u,w\equiv 1\mod 2^{n+1}.$ A different argument may be needed for other values of $b.$
\end{remark}

\noindent\textbf{Acknowlegements.} This project began at the Women in Numbers 4 conference at BIRS. We would like to thank BIRS for hosting WIN4 and the AWM Advance Grant for funding the workshop. 

MM partially supported by the Simons Foundation grant \#359721.

\bibliographystyle{plain}	
\bibliography{Ref}

\end{document}